\newenvironment{mythm}[1]
{\innercustomthm}
{\endinnercustomthm}
\newtheorem{theorem}{Theorem}
\newtheorem{lemma}{Lemma}
\newtheorem{definition}{Definition}
\newtheorem{question}{Question}
\newtheorem{corollary}{Corollary}
\newcommand{\R}{\mathbb{R}}
\newcommand{\Q}{\mathbb{Q}}
\newcommand{\hdim}{\dim_{\mathcal{H}}}
\newcommand{\mdim}{\dim_{\mathcal{M}}}
\newcommand{\cH}{\mathcal{H}}
\newcommand{\cV}{\mathcal{V}}
\newcommand{\cR}{\mathcal{R}}
\newcommand{\E}{\mathcal{E}}
\DeclareMathOperator{\diam}{diam}
\DeclareMathOperator{\SO}{SO}
\DeclareMathOperator{\supp}{supp}
\DeclarePairedDelimiter\ang{\langle}{\rangle}
\DeclarePairedDelimiter\norma{\Vert}{\rVert}
\title{Trees of Dot Products in Thin Subsets of $\R^d$}
\author{Arian Nadjimzadah}
\begin{document}
	
\begin{abstract}
	A. Iosevich and K. Taylor showed that compact subsets of $\R^d$ with Hausdorff dimension greater than $(d+1)/2$ contain 
	trees with gaps in an open interval. 
	Under the same dimensional threshold, we prove the analogous result where distance is replaced by the dot product. We additionally show that the gaps of embedded trees of dot products are prevalent in a set of positive Lebesgue measure, and 
	for Ahlfors-David regular sets, the number of trees with given gaps agrees with the regular value theorem.
\end{abstract}

\maketitle
	\section{Introduction}
		The theme of this work can be summarized in the following question: how large must a subset of $\R^d$ be for it to contain certain geometric structures? Though in our work we focus on dot products, the the study of such questions was first motivated by distances.
		
		If $E$ is a set in $\R^d$, define its distance set by $\Delta(E) = \{|x-y| : x,y\in E\}$. 
		When $E \subset \R^2$ is finite, the study of the relationship between $|\Delta(E)|$ and $|E|$ is the celebrated Erd\H{o}s distance problem. The conjecture is $|\Delta(E)| \geq |E|/\log |E|$, which was met up to a square root with Guth and Katz's bound of $|\Delta(E)| \geq |E|/\sqrt{\log |E|}$ \cite{guth-katz}. 
		One could ask what happens when $E \subset \R^d$ is infinite. A first notion of size that one learns in real analysis is the Lebesgue measure, which we will denote from here onward by $|\cdot|$.
		%\footnote{When $E$ is contained in a submanifold of some $\R^m$ of dimension $d$, we endow it with the canonical top-form which we still call $|\cdot|$.}
		The following question could be posed.
		\begin{question}
			If $|E| > 0$, how large must $\Delta(E)$ be? 
		\end{question}
		A theorem of Steinhaus says that when $|E| > 0$, $E - E$ contains an open set around 0, so in particular $\Delta(E)$ contains an open set. This is as large of a set in $\R^d$ that we could ever hope for, so we need a more refined notion of the size of infinite sets. 
		Another notion of size that one might encounter is the Minkowski dimension. 
		\begin{definition}[Minkowski Dimension] \label{d:minkowski}
			Let $N(E, \epsilon)$ be the number of balls of radius $\epsilon > 0$ required to cover the set $E$. Then the lower Minkowski dimension of $E$ is given by 
			\begin{equation*}
				\underline{\mdim}(E) = \liminf_{\epsilon \to 0} \frac{\log N(E, \epsilon)}{\log(1/\epsilon)},
			\end{equation*}
			and the upper Minkowski dimension is 
			\begin{equation*}
				\overline{\mdim}(E) = \limsup_{\epsilon \to 0} \frac{\log N(E, \epsilon)}{\log(1/\epsilon)}.
			\end{equation*}
		\end{definition}
		We can pose the following possibly more refined question.
		\begin{question}
			How large does $\underline{\mdim}(E)$ have to be for $|\Delta(E)| > 0$?
		\end{question}
		Unfortunately this question is still uninteresting. There exist sets 
		which have ``full'' Minkowski dimension, in the sense that the lower Minkowski dimension is as large as it can be, yet their distance sets have measure 0. In fact, they can be merely countable!
		Consider 
		\begin{equation*}
			E = \Q^d \cap [0,1]^d.
		\end{equation*}
		By the density of the rationals, it takes (up to a constant) $1/\epsilon^d$ balls of radius $\epsilon$ to cover $E$, regardless of how small we take $\epsilon$. Thus $\underline{\mdim(E)} = d$, the largest possible dimension in $\R^d$.
		However $\Delta(E)$ is the image of a countable set, so it is itself countable. 
		
		The deficiency in Minkowski dimension is that our covers can consist only of balls of \emph{the same size}. However this does make computations with Minkowski dimension easier.
		The Hausdorff dimension does not have this issue, but it is often more difficult to compute.
		\begin{definition}[Hausdorff Dimension]
			Let
			\begin{equation*}
				\cH_\delta^s(E) = \inf \sum_j r_j^s,
			\end{equation*}
			where the infimum is taken over all countable coverings of $E$ by balls $\{B(x_i, r_i)\}$ such that $r_i < \delta$. Define 
			the $s$-dimensional Hausdorff measure $\cH^s$ by
			\begin{equation*}
				\cH^s(E) = \lim_{\delta \to 0} \cH_\delta^s(E).
			\end{equation*}
			The \emph{Hausdorff Dimension} of $E$, $\hdim(E)$, is the unique number $s_0$ such that $H^s(E) = \infty$ if $s < s_0$ and
			$H^s(E) = 0$ if $s > s_0$.
		\end{definition}

		We can now ask the following interesting question. 
		\begin{question}
			How large must $\hdim(E)$ be to ensure that $|\Delta(E)| > 0$?
		\end{question}
		Kenneth Falconer constructed compact sets $E \subset \R^d$ with $\hdim(E) < d/2$ and $|\Delta(E)| = 0$. He also showed the first nontrivial threshold $\hdim(E) > (d+1)/2]$, which ensures $|\Delta(E)| > 0$ \cite{falconer1985}.
		The correct threshold thus lies in $[d/2, (d+1)/2)$ and the conjecture is $d/2$. 
		The cutting edge is still far from the conjectured threshold. Below is a 
		summary of progress to date.
		\begin{equation*}
			\begin{cases}
				\frac{5}{4}, & d=2, \text{\cite{guth2020falconer}} \\
				\frac{9}{5}, & d =3, \text{\cite{du2021weighted}} \\
				\frac{d}{2} + \frac{1}{4} & d\geq 4, d \text{ even},\text{\cite{du2021improved}} \\
				\frac{d}{2} + \frac{1}{4} + \frac{1}{4(d-1)} & d\geq 4, d \text{ odd},
				\text{\cite{du2019sharp}}
			\end{cases}.
		\end{equation*}

		Steps have been taken in understanding more complex distance configurations in $E$. Let $G$ be a graph and define the \emph{$G$-distance configuration} of $E$ by
		\begin{equation*}
			\Delta_G(E) = \{(|x_i - x_j|)_{(i,j) \in \E(G)} : (x_1,\ldots, x_{|\cV(G)|}) \in E^{|\cV(G)|}\}.
		\end{equation*}
		Here $\cV(G)$ and $\E(G)$ are the vertices and edges of $G$ respectively.
		A. Iosevich and K. Taylor \cite{treesIosevichTaylor} showed that for a tree $T$, $\Delta_T(E)$ contains an entire interval when $E \subset \R^d$ has $\hdim(E) > (d+1)/2$.
		At the other extreme, A. Greenleaf, A. Iosevich, B. Liu and E. Palsson
		\cite{group-actions} showed using a group theoretic approach that if $G$ is the complete graph on $k+1$ vertices (the $k$-simplex), then $|\Delta_G(E)| > 0$ as long as $\hdim(E) > (dk+1)/(k+1)$. 
		
		Distance is certainly not the only quantity that can be associated with two points, and progress has been made in generalizing the Falconer problem in this direction too. A. Greenleaf, A. Iosevich, and K. Taylor \cite{nonempty_interior_radon}
		considered more general $\Phi$-configurations for a class of $\Phi : \R^d \times \R^d \to \R^k$. They showed that the associated configuration set $\Delta_{\Phi}(E) = \{\Phi(x,y) : x,y\in E\}$ has nonempty interior under certain lower bound assumptions on $\hdim(E)$ and regularity of the family of generalized Radon transforms associated with $\Phi$. To avoid some of the Fourier integral operator theory needed to handle a general class of $\Phi$ and because of the nice geometric interpretation, we specialize to dot product in $\R^d$, i.e. 
		\begin{equation*}
			\Phi(x,y) = x \cdot y = x^1y^1 + \cdots x^d y^d.
		\end{equation*}
		Define $\Lambda(E) = \{x \cdot y : x,y\in E\}$.
		The lower bounds on Hausdorff dimension for dot products and similar configurations as in \cite{nonempty_interior_radon} are far less developed than for distances. The best bound so far to ensure that $|\Lambda(E)| > 0$ is $\hdim(E) > (d+1)/2$. Compare this with the table above for distances.
		
		In this work we make progress on understanding $T$-dot-product configurations, for $T$ a tree with some $k$ edges. Define 
		\begin{equation*}
			\Lambda_T(E) = \{(x_i \cdot x_j)_{(i,j) \in E(T)} : (x_1,\ldots, x_{k+1}) \in E^{k+1}\}.
		\end{equation*}
		% 	When $G$ is a complete graph, the problem becomes overdetermined and
		% 	dealing with it is more nuanced than with sparser graphs such as trees. We discuss some difficulties and possible approaches in Section \ref{} TODO. INCLUDE THIS?. 
		Before arriving at our results, we need the following machinery. It is well known that if $E\subset \R^d$ has $\hdim(E) > \alpha$, there is a number $s \in (\alpha, \hdim(E))$ and finite Borel measure supported on $E$ such that
		\begin{equation*}
			\mu(B(x,r)) \lesssim r^s,
		\end{equation*}
		for each $x \in \R^d$ and $r >0$. We call such a $\mu$ a \emph{Frostman measure with exponent} $s$. In light of this, we have the following results.
		\begin{theorem}\label{t:upper_bound}
			Let $T$ be a tree with $k$ edges and $E \subset \R^d$ compact with $\hdim(E) > (d+1)/2$. Then
			for every Frostman measure with exponent $s > (d+1)/2$ supported on $E$, there is a constant $C > 0$ independent of $\epsilon$ such that
			\begin{equation}\label{eq:t:upper_bound}
				\mu^{k+1}(\{(x_1,\ldots, x_{k+1}) \in E^{k+1}: t^{ij} - \epsilon < x_i \cdot x_j < t^{ij} + \epsilon, (i,j) \in \E(T) \}) < C\epsilon^k,
			\end{equation}
			for every collection $\{t^{ij}\}$ and $\epsilon > 0$. 
		\end{theorem}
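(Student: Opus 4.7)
I plan to reduce Theorem \ref{t:upper_bound} to a single-edge $L^2(\mu) \to L^2(\mu)$ operator estimate for the thickened dot-product averaging operator, and then run a tree induction on top of it.

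First, define
\begin{equation*}
B^\epsilon_t f(x) = \int \mathbf{1}[|x \cdot y - t| < \epsilon]\, f(y)\, d\mu(y).
\end{equation*}
The key technical input I would establish is the bilinear form bound
\begin{equation*}
\|B^\epsilon_t f\|_{L^2(\mu)} \leq C\epsilon\, \|f\|_{L^2(\mu)},
\end{equation*}
uniformly in $t$ and $\epsilon$. This is the dot-product analog of the spherical $L^2$-averaging estimate used in \cite{treesIosevichTaylor}; equivalently, it asserts that the pushforward of $\mu \otimes \mu$ under $(x,y) \mapsto x \cdot y$ has a uniformly bounded density on $\R$. The argument would proceed via Fourier analysis: writing the slab indicator as $\mathbf{1}[|s-t|<\epsilon] \approx \int e^{i\tau(s-t)}\hat\chi(\epsilon\tau)\,d\tau$ and applying Plancherel reduces matters to controlling radial integrals of $|\hat\mu|^2$ along lines through the origin, which the Frostman exponent $s > (d+1)/2$ is precisely tailored to handle.

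Next, root $T$ at a leaf $r$, and for each subtree $S$ with chosen root $v$ define
\begin{equation*}
\Phi_S(x) = \int \prod_{e \in \E(S)} \mathbf{1}[|x_i \cdot x_j - t^{ij}| < \epsilon] \prod_{w \in \cV(S) \setminus \{v\}} d\mu(x_w),
\end{equation*}
with $x_v = x$. The left-hand side of \eqref{eq:t:upper_bound} equals $\int \Phi_T\,d\mu$. Writing $c$ for the unique neighbor of the leaf $r$, one has the recursion $\Phi_T(x) = B^\epsilon_{t^{rc}}(\Phi_{T\setminus r})(x)$, so by the operator bound $\|\Phi_T\|_{L^2(\mu)} \leq C\epsilon\, \|\Phi_{T\setminus r}\|_{L^2(\mu)}$. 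Inducting on the number of edges and bounding $\int \Phi_T\,d\mu \leq \mu(E)^{1/2}\|\Phi_T\|_{L^2(\mu)}$ would then yield the desired $C\epsilon^k$.

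The hard part will be the branching step of the induction: when the subtree $T \setminus r$ contains an interior vertex $v$ of degree $\geq 3$, the recursion at $v$ produces a product $\prod_i B^\epsilon_{t^{vc_i}}(\Phi_{S_i})$, whose $L^2(\mu)$ norm is not directly controlled by the individual factors' $L^2$ norms. Following the strategy of \cite{treesIosevichTaylor}, I would handle this by iterated Cauchy--Schwarz on the edges incident to $v$, peeling off one subtree at a time, combined with the doubling identity $\|\Phi_S\|_{L^2(\mu)}^2 = I_{S^{(2)}}(\epsilon)$, where $S^{(2)}$ denotes the tree obtained by duplicating every branch hanging off the root of $S$. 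The induction is then run simultaneously for $I_T$ and for $\|\Phi_T\|_{L^2(\mu)}$ across all trees $T$. Once the single-edge $L^2(\mu)$ operator bound is established, this bookkeeping is the main remaining technical task; the combinatorial structure of the tree propagates one factor of $\epsilon$ per edge.
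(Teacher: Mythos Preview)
Your overall plan matches the paper's: reduce to a uniform $L^2(\mu)\to L^2(\mu)$ bound for the dot-product averaging operator, then peel leaves one at a time. The paper's operator $\cR^\epsilon_t$ uses a smooth cutoff rather than an indicator, and the bound is obtained not by the direct Fourier/projection argument you sketch but by bootstrapping the classical Sobolev estimate $\cR^\epsilon_t:L^2(\R^d)\to L^2_{(d-1)/2}(\R^d)$ for the Radon transform through a Littlewood--Paley decomposition of $f\mu$ and $g\mu$, handling near-diagonal frequency pieces with the Sobolev gain and far-off-diagonal pieces by nonstationary phase. With that bound in hand, the paper simply writes $f_T^\epsilon=\cR^\epsilon_{t'}(f_{T'}^\epsilon\mu)$ where $T'$ is $T$ minus a leaf, and inducts via $\|f_T^\epsilon\|_{L^2(\mu)}\lesssim\|f_{T'}^\epsilon\|_{L^2(\mu)}$ --- no products at branching vertices, no Cauchy--Schwarz, no doubling.

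So the branching step you flag as ``the hard part'' is exactly where your plan diverges from the paper. The paper does not single it out, but you are right to: read literally, the paper's recursion $f_T^\epsilon=\cR^\epsilon_{t'}(f_{T'}^\epsilon\mu)$ always produces the pinned count of a \emph{chain} with edge-values in the chosen peeling order, so the asserted identity $\cV^\epsilon_{T,t}(\mu)=\int f_T^\epsilon\,d\mu$ is only correct when $T$ is a path. Your proposed repair via iterated Cauchy--Schwarz and the doubling identity $\|\Phi_S\|_{L^2(\mu)}^2=I_{S^{(2)}}$ is a natural move, but note that $S^{(2)}$ has $2|\E(S)|$ edges, so an induction on the number of edges will not close as stated; to make this rigorous you would need to induct on a different parameter (for instance the diameter of the tree, or split at a centroid edge so that both halves remain strictly smaller after doubling) rather than simply ``run the induction simultaneously across all trees.''
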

		In the proof of Theorem \ref{t:upper_bound},
		we follow a scheme developed by A. Iosevich et.~al. \cite{iosevich2019maximal} to bootstrap a Sobolev operator bound to a $L^2(\mu) \to L^2(\mu)$ bound. This gives us a mechanism to `rip' leaves from a tree until nothing is left. We remark that in the case of chain configurations
		\begin{equation*}
			\{(x_1 \cdot x_2, x_2 \cdot x_3, \ldots, x_k \cdot x_{k+1}) : (x_1,\ldots, x_{k+1}) \in E^k\},
		\end{equation*}
		Theorem \ref{t:upper_bound} is a special case of work done by A. Iosevich, K. Taylor, and I. Uriarte-Tuero \cite{IosevichTaylorIgnacio}.

		We would also like to find a lower bound for a quantity like \eqref{eq:t:upper_bound}. The idea
		will be to embed $T$ in a \emph{symmetric tree cover} $\sigma(T)$ which can be `folded' down to a
		single edge, at which point we can apply a result in \cite{nonempty_interior_radon} to the single edge.
		We define $\sigma(T)$ in Section \ref{sec:lower_bound}. 
		\begin{theorem}\label{t:lower_bound}
			Let $T$ be a tree with $k$ edges and $E \subset \R^d$ compact with $\hdim(E) > (d+1)/2$. For every Frostman measure with exponent $s > (d + 1)/2$ supported on $E$, there is a constant $c > 0$ independent of $\epsilon$ and open interval $I$ such that for each $t \in I$ and $\epsilon > 0$,
			\begin{equation*}
				\mu^{k+1}(\{(x_1,\ldots, x_{k+1}) \in E^{k+1}: t - \epsilon < x_i \cdot x_j < t + \epsilon, (i,j) \in \E(\sigma(T)) \}) > c\epsilon^k.
			\end{equation*}
		\end{theorem}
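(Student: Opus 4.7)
My plan is to reduce the $\sigma(T)$-count to the single-edge count, using the symmetric structure of $\sigma(T)$ together with the nonempty-interior theorem of \cite{nonempty_interior_radon} as the base case.

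First I would establish the single-edge base case. Specializing \cite{nonempty_interior_radon} to the bilinear form $\Phi(x,y)=x\cdot y$ and Frostman exponent $s > (d+1)/2$ gives an open interval $I \subset \R$ and $c_0>0$ such that
\begin{equation*}
	\int_E A_\epsilon^t(x)\, d\mu(x) \;\geq\; c_0 \epsilon \qquad (t\in I, \; \epsilon \text{ small}),
\end{equation*}
where $A_\epsilon^t(x) := \int_E \mathbf{1}_{|x\cdot y - t|<\epsilon}\,d\mu(y)$. By revisiting the Fourier-analytic proof—using the continuity of the dot-product Radon transform $R_t\mu$ along with the upper bound of Theorem~\ref{t:upper_bound} for $k=1$—I would extract the pointwise strengthening that $A_\epsilon^t(x) \geq c_1 \epsilon$ on a set $E_0 \subset E$ with $\mu(E_0) > 0$, uniformly in $t \in I$.

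Second, I would unpack the construction of $\sigma(T)$ in Section \ref{sec:lower_bound}. I interpret the ``symmetric'' and ``foldable onto a single edge'' features as supplying a rooted presentation in which every non-leaf vertex $v$ carries a cluster of $m_v$ symmetric leaf-children, so that integrating these leaves first produces an integrand containing the factor $A_\epsilon^t(x_v)^{m_v}$. Replacing the ambient $\mu$-integration at $x_v$ by integration over $E_0$ and using the pointwise lower bound, each such cluster contributes at least $(c_1\epsilon)^{m_v}$, while the remaining ``constraint between $x_v$ and its parent $x_{\pi(v)}$'' integrates to a factor of the single-edge type, hence $\gtrsim\epsilon$ by the base case. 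Peeling off leaves layer by layer, the symmetric structure of $\sigma(T)$ ensures that every intermediate tree is again of the same symmetric type, so the inductive step closes on itself and the $\epsilon$-factors accumulate to the claimed $c\, \epsilon^{|\E(\sigma(T))|}= c\epsilon^k$.

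The main obstacle will be the pointwise strengthening of the base case: the integrated estimate from \cite{nonempty_interior_radon} does not immediately yield a uniform $A_\epsilon^t \geq c_1\epsilon$ on a fixed positive-measure set (a crude Markov-type argument only produces a good set whose measure shrinks with $\epsilon$). Extracting a genuine pointwise/positive-measure lower bound requires tracking dispersion and continuity of the dot-product Radon transform in the Fourier-analytic proof, and then aligning this $E_0$ with the combinatorial symmetries of $\sigma(T)$ so that the iteration does not leak out of the ``good'' regime. Once this compatibility is established, the rest of the argument is a bookkeeping exercise over the levels of $\sigma(T)$.
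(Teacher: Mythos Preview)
Your plan manufactures an obstacle that the symmetric tree cover was designed precisely to avoid. The pointwise strengthening $A_\epsilon^t(x)\geq c_1\epsilon$ on a fixed positive-$\mu$-measure set $E_0$, uniformly in $t$ and $\epsilon$, is not a consequence of the integrated bound in \cite{nonempty_interior_radon}, and you correctly note that Markov only yields an $\epsilon$-dependent good set. You then have to propagate this good set through every level of the tree, which you also flag as unresolved. Both difficulties are real, and neither is addressed in your proposal.

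The point of $\sigma(T)$ is not that its leaves can be peeled off pointwise, but that at the chosen pivot $u$ it decomposes into $\deg(u)$ \emph{identical} copies of $\sigma(T_u)$ sharing only the vertex $u$. Writing $F(u)$ for the inner integral over one such copy (all variables except $u$), one has
\[
\cV^\epsilon_{\sigma(T),t}(\mu)=\int F(u)^{\deg(u)}\,d\mu(u)
\;\geq\;\Bigl(\int F(u)\,d\mu(u)\Bigr)^{\deg(u)}
=\bigl(\cV^\epsilon_{\sigma(T_u),t}(\mu)\bigr)^{\deg(u)},
\]
the inequality being H\"older (equivalently Jensen) for the probability measure $\mu$. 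This folds $\sigma(T)$ down to the strictly smaller tree $\sigma(T_u)$ using only the \emph{integrated} lower bound, and induction ends at the single-edge case from \cite{nonempty_interior_radon}. No pointwise information, no good set $E_0$, and no alignment of $E_0$ with the tree structure is needed. Your description of $\sigma(T)$ as having ``clusters of symmetric leaf-children'' at each vertex also misreads the construction: the branches at $u$ are full copies of $\sigma(T_u)$, not leaves, and the recursion is on those copies.

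A minor separate point: $\sigma(T)$ in general has strictly more than $k$ edges, so your identification $\epsilon^{|\E(\sigma(T))|}=\epsilon^k$ is wrong; the exponent and the ambient power of $\mu$ in the statement should be read as matching the size of $\sigma(T)$, not of $T$.
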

		From Theorem \ref{t:upper_bound} we can deduce that any tree $T$ is embedded in $E$ with \emph{many} different edge-wise dot products. We mean this in the following sense.
		\begin{corollary} \label{c:upper}
			Let $E \subset \R^d$ be compact with $\hdim(E) > (d+1)/2$. Then $|\Lambda_T(E)| > 0$.
		\end{corollary}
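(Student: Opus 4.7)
The plan is to translate Theorem~\ref{t:upper_bound} into an absolute-continuity statement for the pushforward of $\mu^{k+1}$ under the tree dot-product map, from which the corollary will follow by a standard covering argument. First, since $\hdim(E) > (d+1)/2$, there exists a Frostman measure $\mu$ supported on $E$ with exponent $s > (d+1)/2$; after rescaling I may assume $\mu(E) = 1$. Define
\[
\Phi_T : E^{k+1} \to \R^k, \qquad \Phi_T(x_1, \ldots, x_{k+1}) = (x_i \cdot x_j)_{(i,j) \in \E(T)},
\]
and let $\nu = (\Phi_T)_* \mu^{k+1}$. Then $\nu$ is a Borel probability measure supported on $\Lambda_T(E)$, which is compact as the continuous image of the compact set $E^{k+1}$. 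Taking $Q(t,\epsilon) \subset \R^k$ to be the axis-aligned open cube of side $2\epsilon$ centered at $t$, Theorem~\ref{t:upper_bound} rewrites as
\[
\nu\bigl(Q(t, \epsilon)\bigr) \leq C\epsilon^k \qquad \text{for all } t \in \R^k \text{ and } \epsilon > 0.
\]

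From here I would argue by contradiction. Suppose $|\Lambda_T(E)| = 0$. By outer regularity of $k$-dimensional Lebesgue measure, for any $\eta > 0$ there is an open set $U \supset \Lambda_T(E)$ with $|U| < \eta$. Decompose $U$ into a countable family of essentially disjoint dyadic cubes $\{Q(t_j, \epsilon_j)\}$, so that $\sum_j (2\epsilon_j)^k \leq |U|$. Applying the cube estimate above termwise yields
\[
1 = \nu(\Lambda_T(E)) \leq \nu(U) \leq \sum_j \nu\bigl(Q(t_j, \epsilon_j)\bigr) \leq C \sum_j \epsilon_j^k \lesssim \eta.
\]
Sending $\eta \to 0$ forces $1 \leq 0$, a contradiction, so $|\Lambda_T(E)| > 0$.

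The conceptually substantive work has already been absorbed into Theorem~\ref{t:upper_bound}; the only genuinely new ingredient is the covering argument above, which is routine and not a real obstacle. Equivalently, one could invoke the Lebesgue differentiation theorem to note that the cube estimate bounds the upper $k$-density of $\nu$ uniformly, so $\nu$ is absolutely continuous with respect to Lebesgue measure on $\R^k$ with bounded density, and therefore its support cannot be Lebesgue null. The only point requiring a sentence of care is checking that $\nu$ is genuinely supported in $\Lambda_T(E)$, which follows from compactness of $E^{k+1}$ and continuity of $\Phi_T$.
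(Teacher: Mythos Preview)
Your proof is correct and follows essentially the same route as the paper: both arguments interpret Theorem~\ref{t:upper_bound} as a uniform bound on the $\mu^{k+1}$-mass of preimages of cubes under the dot-product map, then run a covering argument to conclude that $\Lambda_T(E)$ cannot be Lebesgue null. The paper phrases it directly (any cover of $\Lambda_T(E)$ by cubes must have total volume at least $1/C$), while you pass through the pushforward measure and argue by contradiction, but the content is identical.
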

		It is also interesting to pinpoint which embeddings of a graph are contained in $E$ and how many such embeddings there are.
		For the distance variant of this question see \cite{treesIosevichTaylor}.
		We define the set of embeddings of $T$ in $E$ with dot-product vector $t = (t^{ij})$ as 
		\begin{equation*}\label{eq:T-embedding}
			T_t(E) = \{(x_1, \ldots, x_{k+1}) \in E^{k+1} : x_i \cdot x_j = t^{ij}, (i,j)\in \E(T)\}.
		\end{equation*}
		When $t$ is a scalar, we take all the $t^{ij} = t$ in \eqref{eq:T-embedding}.
		We can use Theorem \ref{t:lower_bound} to show that there are embeddings with equal edge value.
		\begin{corollary}\label{c:equal_embeddings}
			Let $E \subset \R^d$ be compact with $\hdim(E) > (d+1)/2$. Then 
			there is an open interval $I$ such that for each $t \in I$,
			$T_t(E)$ is nonempty.
		\end{corollary}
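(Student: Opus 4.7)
The plan is to derive the corollary from Theorem \ref{t:lower_bound} by a standard compactness argument. First I would fix a Frostman measure $\mu$ supported on $E$ with exponent $s > (d+1)/2$, whose existence is guaranteed by the assumption $\hdim(E) > (d+1)/2$, and let $I$ and $c > 0$ be the open interval and positive constant produced by Theorem \ref{t:lower_bound}. For any fixed $t \in I$ and positive integer $n$, the set
$$S_n(t) = \{(x_1, \ldots, x_{k+1}) \in E^{k+1} : |x_i \cdot x_j - t| < 1/n, \ (i,j) \in \E(\sigma(T))\}$$
has $\mu^{k+1}$-measure at least $c\, n^{-k} > 0$, and is therefore nonempty.

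Next I would pick a representative $p_n = (x_1^{(n)}, \ldots, x_{k+1}^{(n)}) \in S_n(t)$ for each $n$. Since $E$ is compact, so is $E^{k+1}$, and a subsequence $p_{n_j}$ converges to some $p^\star = (x_1^\star, \ldots, x_{k+1}^\star) \in E^{k+1}$. For each $(i,j) \in \E(\sigma(T))$, continuity of the dot product together with the bound $|x_i^{(n_j)} \cdot x_j^{(n_j)} - t| < 1/n_j \to 0$ forces $x_i^\star \cdot x_j^\star = t$ exactly.

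Finally, since the construction of $\sigma(T)$ is arranged so that $T$ embeds into $\sigma(T)$ on the same vertex set (see Section \ref{sec:lower_bound}), we have $\E(T) \subseteq \E(\sigma(T))$. Hence the equalities $x_i^\star \cdot x_j^\star = t$ persist for all $(i,j) \in \E(T)$, and $p^\star \in T_t(E)$, which shows $T_t(E)$ is nonempty for every $t \in I$. The only delicate step is the last one: verifying that the edges of $T$ really do appear among the edges of $\sigma(T)$ under the same vertex labeling. This is, however, built into the meaning of the phrase ``embed $T$ in $\sigma(T)$'' already used in the introduction, so once the construction in Section \ref{sec:lower_bound} is in hand, no additional analytic work is required and the argument reduces to the elementary compactness step above.
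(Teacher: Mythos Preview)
Your proof is correct and takes essentially the same approach as the paper: invoke Theorem \ref{t:lower_bound} to see the approximate level sets are nonempty, use compactness of $E^{k+1}$ to produce an exact point, and then pass from $\sigma(T)$ to $T$ via the inclusion $T\subset\sigma(T)$. The only cosmetic difference is that the paper packages the compactness step as Cantor's intersection theorem applied to the nested closed sets $K_n=\{(x_1,\dots,x_{k+1}):|x_i\cdot x_j-t|\le 1/n\}$ rather than extracting a convergent subsequence.
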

		Using Theorem \ref{t:upper_bound}, we can show that 
		when $E$ is Ahlfors-David regular,
		there cannot be too many 
		embeddings of any given type $t = (t^{ij})$.
		Before getting to the corollary, we define Ahlfors-David regular. 
		\begin{definition}
			A set $E \subset \R^d$ is Ahlfors-David $s$-regular if it is closed and 
			if there exists a Borel measure $\mu$ supported on $E$ and a constant $C$ such that 
			\begin{equation*}
				C^{-1}r^s \leq \mu(B(x,r)) \leq C r^s,
			\end{equation*}
			for all $x \in E$, $0 < r \leq \diam(E)$, $r < \infty$. 
		\end{definition}
		Note that when working on compact sets $E$, such measures $\mu$ are finite. 
		We prove the following. 
		\begin{corollary}\label{c:regular}
			Let $E \subset \R^d$ be compact Ahlfors-David $s$-regular, for some $s > (d+1)/2$. Then 
			for any $t = (t^{ij})$, 
			\begin{equation*}
				\overline{\mdim}(T_t(E)) \leq (k+1)s - k.
			\end{equation*}
		\end{corollary}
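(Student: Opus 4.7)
The plan is a standard covering argument that converts the measure-theoretic upper bound of Theorem \ref{t:upper_bound} into a metric entropy bound via the Ahlfors-David regularity lower bound.

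First I would fix a maximal $\epsilon$-separated subset $\{y_1, \ldots, y_N\}$ of $T_t(E)$, so that balls of radius $2\epsilon$ centered at these points cover $T_t(E)$ and $N \geq N(T_t(E), 2\epsilon)$. Writing $y_i = (x_1^{(i)}, \ldots, x_{k+1}^{(i)}) \in E^{k+1}$, I would associate to each $y_i$ the product neighborhood
\begin{equation*}
	P_i = \prod_{a=1}^{k+1} B(x_a^{(i)}, \epsilon/(2\sqrt{k+1})) \cap E.
\end{equation*}
Because $\|y_i - y_j\|_{\ell^2} > \epsilon$ forces some coordinate $a$ to satisfy $\|x_a^{(i)} - x_a^{(j)}\| > \epsilon/\sqrt{k+1}$, the sets $P_i$ are pairwise disjoint in $E^{k+1}$.

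Next I would use the two sides of Ahlfors-David regularity separately. For the lower bound: the product $\mu^{k+1}$ of the $s$-regular measure $\mu$ on $E$ satisfies $\mu^{k+1}(P_i) \gtrsim \epsilon^{(k+1)s}$ by taking the product of the lower regularity bounds in each coordinate. For the upper bound I appeal to Theorem \ref{t:upper_bound}: since $E$ is compact, there is an $R$ with $\|z\| \leq R$ for all $z \in E$, so for any $(z_1, \ldots, z_{k+1}) \in P_i$ and any edge $(a,b) \in \E(T)$,
\begin{equation*}
	|z_a \cdot z_b - t^{ab}| \leq \|z_a\|\,\|z_b - x_b^{(i)}\| + \|z_a - x_a^{(i)}\|\,\|x_b^{(i)}\| \leq C_0 \epsilon,
\end{equation*}
so $\bigcup_i P_i$ is contained in the set appearing in \eqref{eq:t:upper_bound} with $\epsilon$ replaced by $C_0\epsilon$. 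Disjointness gives
\begin{equation*}
	N \cdot c \epsilon^{(k+1)s} \leq \sum_{i=1}^N \mu^{k+1}(P_i) = \mu^{k+1}\Bigl(\bigcup_i P_i\Bigr) \leq C (C_0\epsilon)^k.
\end{equation*}

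Rearranging yields $N(T_t(E), 2\epsilon) \leq N \lesssim \epsilon^{k - (k+1)s}$, and substituting into Definition \ref{d:minkowski} gives $\overline{\mdim}(T_t(E)) \leq (k+1)s - k$. There is no real obstacle here beyond bookkeeping; the only mildly delicate point is ensuring the ``fattening'' on the edge-values when passing from a single tuple to a product ball is linear in $\epsilon$ with a constant depending only on $\diam(E)$, which is precisely where compactness of $E$ enters and allows a direct application of Theorem \ref{t:upper_bound} with a uniform $\epsilon$-scale.
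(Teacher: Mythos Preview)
Your argument is correct and is essentially the same as the paper's: both pick a maximal $\epsilon$-packing of $T_t(E)$, use the Ahlfors--David lower regularity to get $\mu^{k+1}$-mass $\gtrsim \epsilon^{(k+1)s}$ on each disjoint neighborhood, and bound the total mass from above by Theorem \ref{t:upper_bound} after observing that the dot products move by $O(\epsilon)$ under an $\epsilon$-perturbation of the vertices. The only cosmetic difference is that the paper works with $\ell^2$-balls in $\R^{d(k+1)}$ and the packing number directly, whereas you use coordinate-wise product balls and a maximal separated set; these are equivalent up to constants.
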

		As we explained above, Minkowski dimension is a weaker notion than Hausdorff dimension when working with lower bounds. However 
		for upper bounds, Minkowski dimension is the stronger statement. In summary
		\begin{equation*}
			\hdim(A) \leq \underline{\mdim(A)} \leq \overline{\mdim(A)}.
		\end{equation*}
		Corollary \ref{c:regular} should not be too surprising. 
		Say we were working on $\R^d$ instead of $E$. Then we have $k$ equations $x_i \cdot x_j = t^{ij}$ and $k+1$ variables $x_1,\ldots, x_{k+1}$.
		So the regular value theorem tells us that $T_t(E)$ has dimension $1$. 
		In our case $E$ has dimension $s$, so one can think of $T_t(E)$ as $s(k+1)$ dimensions of freedom cut by $k$ equations, giving $(k+1)s - k$ remaining dimensions.

	\section{Initial Reductions}\label{sec:reductions}
		Let $E \subset \R^d$ have $\hdim(E) > (d+1)/2$. Then there is a Frostman measure $\mu$ with exponent $s$ supported on $E$, for some $s \in ((d+1)/2, \hdim(E))$.
		
		We can reduce the problem to when $E \subset [c,1]^d$ for a fixed constant $c$.
		To see this, 
		cut $\R^d$ into dyadic annuli $\{2^j \leq |x| \leq 2^{j+1}\}$. $\mu$ is positive on at least one of
		 these, and by rescaling we can assume it is $\{1/2 \leq |x| \leq 1\}$. If we cover $\{1/2 \leq |x| \leq
		 1\}$ with balls of radius $1/100$, $\mu$ is again positive on at least one of these. Notice that for
		 $\theta \in \SO(2)$, $(x\theta) \cdot (y\theta) = x \cdot y$. Thus rotating the measure $\mu$
		 does not affect the quantity in Theorem \ref{t:upper_bound} or \ref{t:lower_bound}, so we can
		 assume this ball is contained in $[c,1]^d$ for a fixed constant
		 $c$. Then we simply restrict $\mu$ to this ball and renormalize.

	\section{Proof of Theorem \ref{t:upper_bound} and Corollaries \ref{c:upper} and \ref{c:regular}}
		\label{sec:upper}
		\subsection{Proof of Theorem \ref{t:upper_bound}}
			We define a quantity $\cV_{T,t}$ which is approximately the quantity in Theorem \ref{t:upper_bound}. 
			Let $\rho$ be a smooth bump function on $\R$ supported around 0 and set $\rho^\epsilon(\cdot) = \epsilon^{-1} \rho(\epsilon^{-1} \cdot)$. Then define
			\begin{equation*}
				\cV^\epsilon_{T,t}(\mu) = \int \cdots \int \left( \prod_{(i,j) \in \mathcal E(T)} \rho^{\epsilon}(x_i\cdot x_j - t^{ij})\right) d\mu(x_1)\cdots d\mu(x_{k+1}).
			\end{equation*}
			The idea is to rip a leaf edge from $T$ one at a time until the tree is empty. One needs a corresponding mechanism that operators on $\cV_{T,t}^\epsilon(\mu)$ executing this plan, which is what we develop below. This is in the same spirit as M. Bennet, A. Iosevich, and K. Taylor's work on chain configurations \cite{chains}. 
			More concretely, we need to show that $\cV_{T,t}^\epsilon(\mu) \leq C$ independently of $\epsilon$ and $t$. We recast this problem in terms of operators for which we have nice results. 
			Define $\cR_t^\epsilon$ to be the operator with kernel $\rho^\epsilon(x \cdot y - t)$, that is 
			\begin{equation*}
				\cR^\epsilon_t f(x) = \int f(y) \rho^\epsilon(x \cdot y - t) dy.  
			\end{equation*}
			We also define 
			\begin{equation*}
				\cR^\epsilon_t (f\mu) (x) = \int f(y) \rho^\epsilon(x \cdot y - t) d\mu(y).
			\end{equation*}
			\footnote{This operator is known as the \emph{Radon Transform}. See Section \ref{subsec:radon} for more details or \cite{steinshakfunctional} for an in-depth review.}
			Then we can cast $\cV^\epsilon_{T,t}(\mu)$ in a way conducive to `ripping off' edges.
			\begin{definition}\label{d:inductive_def_upper}
				For $T$ a tree with a single vertex, define $f_T^\epsilon = 1$. Let $T$ be a tree with $k \geq 1$
				edges and say $y$ is a leaf with edge $(x,y)$. Say $(x,y)$ has corresponding dot product $t'$. Remove the leaf edge from $T$ to obtain a subtree $T'$. Define
				\begin{equation*}
					f_T^\epsilon = \cR_t^\epsilon(f_{T'}^\epsilon \mu).
				\end{equation*}
			\end{definition}
			One can think of $f_T^\epsilon(x)$ as $T$ pinned at $x$. Then integrating over the pinned point gives the entirety of $\cV^\epsilon_{T,t}(\mu)$. That is
			\begin{equation*}
				\cV^\epsilon_{T,t}(\mu) = \int f_T^\epsilon(x) d\mu(x).
			\end{equation*}
			By Cauchy-Schwarz and as $\mu$ is a probability measure,
			\begin{equation*}
				\cV^\epsilon_{T,t}(\mu) = \norm{f_T^\epsilon}_{L^1(\mu)} \leq \norm{f_T^\epsilon}_{L^2(\mu)}.
			\end{equation*}
			We use the following operator norm to run the induction.  
			\begin{theorem}\label{t:L2_mu_radon}
				If $\mu$ is a Frostman measure with exponent $s > (d+1)/2$ and with support as was established in Section \ref{sec:reductions}, $\cR_t^\epsilon$ is a bounded linear operator $L^2(\mu) \mapsto L^2(\mu)$ with
				\begin{equation*}
					\norma{\cR_t^\epsilon(f\mu)}_{L^2(\mu)} \lesssim \norm{f}_{L^2(\mu)}
				\end{equation*}
				independently of $\epsilon > 0$ and for $t \approx 1$.
			\end{theorem}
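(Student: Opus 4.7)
The strategy is to bootstrap a Sobolev smoothing estimate for $\cR_t^\epsilon$ (viewed as an operator on Lebesgue space) to the desired $L^2(\mu)\to L^2(\mu)$ bound via a standard Frostman pairing. First, I would establish the FIO mapping property
\[
\|\cR_t^\epsilon f\|_{H^{a+(d-1)/2}(\R^d)}\;\lesssim\;\|f\|_{H^{a}(\R^d)}
\]
uniformly in $\epsilon>0$, in $t$ for $t$ in a neighborhood of $1$, and for every real $a$. Starting from the Fourier representation
\[
\cR_t^\epsilon f(x)\;=\;\frac{1}{2\pi}\int\hat\rho(\epsilon\tau)\,e^{-i\tau t}\,\hat f(-\tau x)\,d\tau,
\]
this amounts to the statement that the generalized Radon transform with phase $\Phi(x,y)=x\cdot y-t$ smooths by $(d-1)/2$ derivatives: the surfaces $\{y:x\cdot y=t\}$ are non-degenerate hyperplanes on $[c,1]^d\times[c,1]^d$ for $t\approx 1$, so the associated canonical relation satisfies the Bolker condition and $\cR_t^\epsilon$ is an FIO of order $-(d-1)/2$ uniformly in $\epsilon$; the rapid decay of $\hat\rho$ is what secures the uniformity in the smoothing parameter.

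Second, choosing some $s'\in((d+1)/2,s)$ (possible since $\mu$ is Frostman with exponent $s>(d+1)/2$), I would record the Frostman pairing
\[
\|g\mu\|_{H^{-(d-s')/2}(\R^d)}\;\lesssim\;\|g\|_{L^2(\mu)},
\]
proved via Plancherel on the Riesz kernel,
\[
\int|\widehat{g\mu}(\xi)|^2|\xi|^{s'-d}\,d\xi \;=\; c_{s',d}\iint g(y)\overline{g(z)}|y-z|^{-s'}\,d\mu(y)d\mu(z) \;\leq\; \|g\|_{L^2(\mu)}^2\sup_y\int|y-z|^{-s'}\,d\mu(z),
\]
together with the finiteness of the supremum since the Frostman exponent $s$ strictly exceeds $s'$. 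The low-frequency part of the inhomogeneous Sobolev norm is handled trivially using $|\widehat{g\mu}(\xi)|\leq \|g\|_{L^2(\mu)}\mu(\R^d)^{1/2}$. Dualizing this pairing produces the companion trace estimate $\|h\|_{L^2(\mu)}\lesssim\|h\|_{H^{(d-s')/2}(\R^d)}$ for every distribution $h$ in the right Sobolev space.

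Chaining the three ingredients yields
\[
\|\cR_t^\epsilon(f\mu)\|_{L^2(\mu)}\;\lesssim\;\|\cR_t^\epsilon(f\mu)\|_{H^{(d-s')/2}}\;\lesssim\;\|f\mu\|_{H^{(1-s')/2}}\;\leq\;\|f\mu\|_{H^{-(d-s')/2}}\;\lesssim\;\|f\|_{L^2(\mu)},
\]
where the smoothing bound is applied with $a=(1-s')/2$ so that $a+(d-1)/2=(d-s')/2$, and the middle Sobolev embedding $H^{-(d-s')/2}\hookrightarrow H^{(1-s')/2}$ is valid exactly when $-(d-s')/2\geq(1-s')/2$, equivalently $2s'\geq d+1$, which holds by our choice $s'>(d+1)/2$.

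The hard part will be the first ingredient: the $\epsilon$-uniform Sobolev smoothing estimate for $\cR_t^\epsilon$. Once it is in hand, the chain above is routine, and the strict inequality $s>(d+1)/2$ gives the positive margin needed to select $s'$ and keep all constants finite. I expect the smoothing estimate to be developed in the paper's Section~\ref{subsec:radon}, either by invoking the general FIO theory for generalized Radon transforms or by an explicit Fourier-side calculation using the representation above and the Schwartz decay of $\hat\rho$.
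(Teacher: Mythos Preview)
Your approach is correct and genuinely different from the paper's. Both arguments rest on the same two pillars---the $(d-1)/2$-smoothing of the Radon-type operator and the Frostman energy estimate $\|g\mu\|_{H^{-(d-s')/2}}\lesssim\|g\|_{L^2(\mu)}$---but they assemble them differently. You invoke the full FIO mapping property $\cR_t^\epsilon:H^a\to H^{a+(d-1)/2}$ on the whole Sobolev scale (justified here since the mixed Hessian $\partial_x\partial_y(x\cdot y)=I$ gives a local canonical graph), then run the clean three-line chain through negative-index Sobolev spaces. The paper instead assumes only the base case $\cR_t^\epsilon:L^2\to L^2_{(d-1)/2}$ (Theorem~\ref{t:Radon_sobolev}), and pays for this weaker input with a bilinear Littlewood--Paley decomposition: near-diagonal pieces $|j-k|\le M$ are summed using the smoothing plus the dyadic Frostman bound (Lemma~\ref{l:wolff_lemma}, which is the frequency-localized form of your energy estimate), while far-off-diagonal pieces are killed by an explicit nonstationary-phase computation (Lemma~\ref{l:diag}). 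Your route is shorter and conceptually cleaner, at the cost of importing the negative-index Sobolev mapping from FIO theory; the paper's route is more self-contained, essentially re-deriving the pseudo-locality that your general FIO statement already encodes.
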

			The proof is left to Section \ref{subsec:radon}. 
			By our initial reduction $\mu$ has support in $[c,1]^d$, so $x \cdot y \approx 1$ on $\supp \mu$. In light of Definition \ref{d:inductive_def_upper}, $f_{T}^\epsilon =\cR_{t'}^\epsilon(f_{T'}^\epsilon)$.
			By Theorem \ref{t:L2_mu_radon} and the inductive hypothesis,
			\begin{align*}
				\norma
				{f_T^\epsilon}_{L^2(\mu)} &= \norma{\cR_{t'}^\epsilon(f_{T'}^\epsilon)}_{L^2(\mu)} \\
				&\lesssim \norma{f_{T'}^\epsilon}_{L^2(\mu)} \\
				&\lesssim 1
			\end{align*}
			independently of $\epsilon>0$ and $t = (t^{ij})$. \qed
		
		\subsection{Proof of Corollary \ref{c:upper}} 
			Consider any cover of $\Lambda_T(E)$ by products of intervals
			\begin{equation*}
				\Lambda_T(E) \subset
				\bigcup_\ell \prod_{(i,j) \in \E(T)} (t_\ell^{ij} - \epsilon_\ell, t_\ell^{ij} + \epsilon_\ell).
			\end{equation*}
			We have 
			\begin{align*}
				E^{k+1} &=\bigcup_{t \in \Lambda_T(E)} \{(x_1,\ldots, x_{k+1}) \in E^{k + 1} : x_i \cdot x_j = t^{ij}, (i,j) \in \E(T)\} \\
				&\subset 
				\bigcup_\ell \{(x_1,\ldots, x_{k+1}) \in E^{k + 1} : t_\ell^{ij} - \epsilon_\ell <x_i  \cdot x_j < t_\ell^{ij} + \epsilon_\ell, (i,j) \in \E(T)\},
			\end{align*}
			so by Theorem \ref{t:upper_bound},
			\begin{align*}
				1 &= \mu^{k+1}(E^{k+1})\\
				&\leq \sum_\ell \mu^{k+1}(\{(x_1,\ldots, x_{k+1}) \in E^{k + 1} : t_\ell^{ij} - \epsilon_\ell <x_i  \cdot x_j < t_\ell^{ij} + \epsilon_\ell, (i,j \in \E(T))\}) \\
				&< \sum_\ell C \epsilon_\ell.
			\end{align*}
			Thus $\sum_\ell \epsilon_\ell > 1/C$. This holds for any choice of covering so $|\Lambda_T(E)| \geq 1/C > 0$. \qed
			
		\subsection{Proof of Corollary \ref{c:regular}}
			In Definition $\ref{d:minkowski}$, we can replace $N(A, \epsilon)$ with $P(A, \epsilon)$. Here $P(A,\epsilon)$ is the packing number, the greatest number of disjoint $\epsilon$-balls with centers in $A$. This follows from the inequality 
			\begin{equation*}
				N(A,2\epsilon) \leq P(A,\epsilon) \leq N(A,\epsilon/2),
			\end{equation*}
			which one can find in a wonderful book by P. Mattila \cite{mattila_geometry}. Consider such a packing $\{B(x_i, \epsilon)\}$ of $T_t(E)$ of size $P(T_t(E), \epsilon)$. Since the centers of 
			the balls $B(x_i,\epsilon)$ are in $T_t(E)$, 
			\begin{equation*}
				\bigcup_i B(x_i, \epsilon) \subset (T_t(E))^{\epsilon}.
			\end{equation*}
			\footnote{$A^\epsilon$ is the $\epsilon$-neighborhood of $A$ defined as 
				$A^\epsilon = \{x \in \R^d : \exists y\in A, |x - y|<\epsilon\}$.}
			For any $(x_1,\ldots, x_{k+1}) \in E^{k+1} \cap (T_t(E))^{\epsilon}$,
			there are $x_1',\ldots, x_{k+1}' \in E$ such that $x_i' \cdot x_j' = t$ for $(i,j) \in E(T)$ and $|x_i - x_i'| < \epsilon$. 
			Thus
			\begin{align*}
				|x_i \cdot x_j - t| &\leq |x_i||x_j - x_j'| + |x_j'||x_i - x_i'| \\
				&\leq 2\epsilon,
			\end{align*}
			giving
			\begin{equation*}
				E^{k + 1} \cap (T_t(E))^{\epsilon} \subset 
				\{(x_1,\ldots, x_{k+1}) \in E^{k+1}: t^{ij} - 2\epsilon < x_i \cdot x_j < t^{ij} + 2\epsilon, (i,j) \in \E(T) \}.
			\end{equation*}
			We can conclude with Theorem \ref{t:upper_bound} that 
			\begin{align*}
				\sum_i \mu^{k+1}(B(x_i, \epsilon)) &=  \mu^{k + 1}\left(\bigcup_i B(x_i,\epsilon)\right) \\
				&\leq \mu^{k+1}(E^{k+1} \cap (T_t(E))^\epsilon) \\
				&\leq \mu^{k+1}(\{(x_1,\ldots, x_{k+1}) \in E^{k+1}: t^{ij} - 2\epsilon < x_i \cdot x_j < t^{ij} + 2\epsilon, (i,j) \in \E(T) \}) \\
				&< C\epsilon^k.
			\end{align*}
			Since $\mu(B(x,r)) \geq C'^{-1}r^s$, we get $\mu^{k+1}(B(x, r)) \geq C'^{-1}r^{(k+1)s}$. We conclude from the above calculation that 
			\begin{equation*}
				C'^{-1}\epsilon^{(k+1)s} P(T_t(E), \epsilon) < C\epsilon^k,
			\end{equation*} 
			so
			\begin{equation*}
				P(T_t(E), \epsilon) < C''\epsilon^{k-s}.
			\end{equation*}
			We obtain
			\begin{align*}
				\overline{\mdim}(T_t(E)) &= \limsup_{\epsilon \to 0} \frac{\log (P(T_t(E),
					 \epsilon))}{\log(1/\epsilon)} \\
				&\leq (k+ 1)s - k.
			\end{align*}
			\qed

	\section{Proof of Theorem \ref{t:lower_bound} and Corollary \ref{c:equal_embeddings}} \label{sec:lower_bound}
		In this section we consider $\cV_{T,t}$ with $t$ a scalar as
		\begin{equation*}
			\cV^\epsilon_{T,t}(\mu) = \int \cdots \int \left( \prod_{(i,j) \in \mathcal E(T)} \rho^{\epsilon}(x_i\cdot x_j - t)\right) d\mu(x_1)\cdots d\mu(x_{k+1}).
		\end{equation*}
		Our lower bound comes from repeated use of Holder's inequality, which `folds' the graph onto itself until reaching a single edge. $T$ itself is not guaranteed to enjoy enough symmetry for such an argument to work out, so we embed $T$ in a larger graph $\sigma(T)$ which is highly symmetric.

		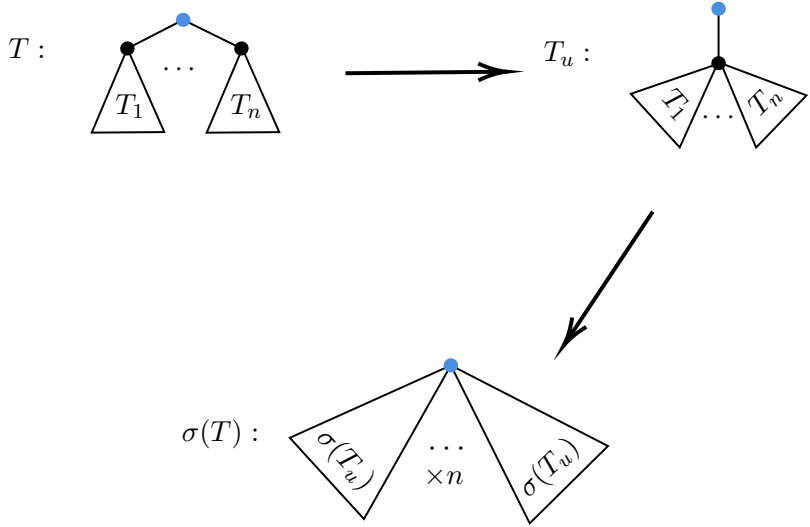
\begin{figure}
			\tikzset{every picture/.style={line width=0.75pt}} %set default line width to 0.75pt        
			
			\begin{tikzpicture}[x=0.75pt,y=0.75pt,yscale=-1,xscale=1]
				%uncomment if require: \path (0,693); %set diagram left start at 0, and has height of 693
				
				%Shape: Triangle [id:dp16664358035174953] 
				\draw   (278.15,274.94) -- (357.39,315.67) -- (317.91,354.67) -- cycle ;
				%Shape: Boxed Line [id:dp3987597987087732] 
				\draw [line width=1.5]    (225,127.5) -- (303.21,126.75) ;
				\draw [shift={(306.21,126.72)}, rotate = 179.45] [color={rgb, 255:red, 0; green, 0; blue, 0 }  ][line width=1.5]    (14.21,-4.28) .. controls (9.04,-1.82) and (4.3,-0.39) .. (0,0) .. controls (4.3,0.39) and (9.04,1.82) .. (14.21,4.28)   ;
				%Straight Lines [id:da6638563784745489] 
				\draw [color={rgb, 255:red, 0; green, 0; blue, 0 }  ,draw opacity=1 ][fill={rgb, 255:red, 0; green, 0; blue, 0 }  ,fill opacity=1 ]   (115,115) -- (143.15,100.55) ;
				%Straight Lines [id:da43547657741166956] 
				\draw [color={rgb, 255:red, 0; green, 0; blue, 0 }  ,draw opacity=1 ][fill={rgb, 255:red, 0; green, 0; blue, 0 }  ,fill opacity=1 ]   (172.5,115) -- (143.15,100.55) ;
				%Flowchart: Connector [id:dp6717497351512752] 
				\draw  [color={rgb, 255:red, 74; green, 144; blue, 226 }  ,draw opacity=1 ][fill={rgb, 255:red, 74; green, 144; blue, 226 }  ,fill opacity=1 ] (140,100.55) .. controls (140,98.86) and (141.41,97.5) .. (143.15,97.5) .. controls (144.89,97.5) and (146.3,98.86) .. (146.3,100.55) .. controls (146.3,102.23) and (144.89,103.59) .. (143.15,103.59) .. controls (141.41,103.59) and (140,102.23) .. (140,100.55) -- cycle ;
				%Shape: Triangle [id:dp7061381963478299] 
				\draw   (115,115) -- (133.6,157.2) -- (97.02,157.47) -- cycle ;
				%Flowchart: Connector [id:dp22048486043959703] 
				\draw  [color={rgb, 255:red, 0; green, 0; blue, 0 }  ,draw opacity=1 ][fill={rgb, 255:red, 0; green, 0; blue, 0 }  ,fill opacity=1 ] (169.74,116.53) .. controls (168.93,115.06) and (169.5,113.18) .. (171.02,112.34) .. controls (172.55,111.49) and (174.44,112) .. (175.26,113.47) .. controls (176.07,114.94) and (175.5,116.82) .. (173.98,117.66) .. controls (172.45,118.51) and (170.56,118) .. (169.74,116.53) -- cycle ;
				%Flowchart: Connector [id:dp9134962720064675] 
				\draw  [color={rgb, 255:red, 0; green, 0; blue, 0 }  ,draw opacity=1 ][fill={rgb, 255:red, 0; green, 0; blue, 0 }  ,fill opacity=1 ] (112.24,116.53) .. controls (111.43,115.06) and (112,113.18) .. (113.52,112.34) .. controls (115.05,111.49) and (116.94,112) .. (117.76,113.47) .. controls (118.57,114.94) and (118,116.82) .. (116.48,117.66) .. controls (114.95,118.51) and (113.06,118) .. (112.24,116.53) -- cycle ;
				%Shape: Triangle [id:dp8880753215670298] 
				\draw   (172.98,115.03) -- (191.59,157.23) -- (155,157.5) -- cycle ;
				%Straight Lines [id:da3316517500999733] 
				\draw [color={rgb, 255:red, 0; green, 0; blue, 0 }  ,draw opacity=1 ][fill={rgb, 255:red, 0; green, 0; blue, 0 }  ,fill opacity=1 ]   (413.15,122.44) -- (413.15,94.94) ;
				%Flowchart: Connector [id:dp8600401188140983] 
				\draw  [color={rgb, 255:red, 74; green, 144; blue, 226 }  ,draw opacity=1 ][fill={rgb, 255:red, 74; green, 144; blue, 226 }  ,fill opacity=1 ] (410,94.94) .. controls (410,93.26) and (411.41,91.89) .. (413.15,91.89) .. controls (414.89,91.89) and (416.3,93.26) .. (416.3,94.94) .. controls (416.3,96.62) and (414.89,97.98) .. (413.15,97.98) .. controls (411.41,97.98) and (410,96.62) .. (410,94.94) -- cycle ;
				%Shape: Triangle [id:dp05053303276837662] 
				\draw   (412.5,122.89) -- (393.56,164.94) -- (368.9,137.91) -- cycle ;
				%Flowchart: Connector [id:dp44785275673380376] 
				\draw  [color={rgb, 255:red, 0; green, 0; blue, 0 }  ,draw opacity=1 ][fill={rgb, 255:red, 0; green, 0; blue, 0 }  ,fill opacity=1 ] (410,122.44) .. controls (410,120.76) and (411.41,119.39) .. (413.15,119.39) .. controls (414.89,119.39) and (416.3,120.76) .. (416.3,122.44) .. controls (416.3,124.12) and (414.89,125.48) .. (413.15,125.48) .. controls (411.41,125.48) and (410,124.12) .. (410,122.44) -- cycle ;
				%Shape: Triangle [id:dp05165120139219992] 
				\draw   (414.33,122.44) -- (457.5,138.67) -- (432.09,165) -- cycle ;
				%Shape: Boxed Line [id:dp15615016255838032] 
				\draw [line width=1.5]    (379.97,197.37) -- (336.66,262.5) ;
				\draw [shift={(335,265)}, rotate = 303.62] [color={rgb, 255:red, 0; green, 0; blue, 0 }  ][line width=1.5]    (14.21,-4.28) .. controls (9.04,-1.82) and (4.3,-0.39) .. (0,0) .. controls (4.3,0.39) and (9.04,1.82) .. (14.21,4.28)   ;
				%Shape: Triangle [id:dp236159842406236] 
				\draw   (278.15,274.94) -- (234.31,352.5) -- (196.91,311.51) -- cycle ;
				%Flowchart: Connector [id:dp18499313414497975] 
				\draw  [color={rgb, 255:red, 74; green, 144; blue, 226 }  ,draw opacity=1 ][fill={rgb, 255:red, 74; green, 144; blue, 226 }  ,fill opacity=1 ] (275,274.94) .. controls (275,273.26) and (276.41,271.89) .. (278.15,271.89) .. controls (279.89,271.89) and (281.3,273.26) .. (281.3,274.94) .. controls (281.3,276.62) and (279.89,277.98) .. (278.15,277.98) .. controls (276.41,277.98) and (275,276.62) .. (275,274.94) -- cycle ;
				
				% Text Node
				\draw (131,120.9) node [anchor=north west][inner sep=0.75pt]    {$\cdots $};
				% Text Node
				\draw (107.5,136.4) node [anchor=north west][inner sep=0.75pt]    {$T_{1}$};
				% Text Node
				\draw (165.48,136.43) node [anchor=north west][inner sep=0.75pt]    {$T_{n}$};
				% Text Node
				\draw (53.5,108.4) node [anchor=north west][inner sep=0.75pt]    {$T:$};
				% Text Node
				\draw (403,145) node [anchor=north west][inner sep=0.75pt]    {$\cdots $};
				% Text Node
				\draw (391.57,131.62) node [anchor=north west][inner sep=0.75pt]  [rotate=-48.04]  {$T_{1}$};
				% Text Node
				\draw (424.74,142.45) node [anchor=north west][inner sep=0.75pt]  [rotate=-314.4]  {$T_{n}$};
				% Text Node
				\draw (323.5,109.9) node [anchor=north west][inner sep=0.75pt]    {$T_{u} :$};
				% Text Node
				\draw (265.0,311.9) node [anchor=north west][inner sep=0.75pt]  [font=\large]  {$\cdots $};
				% Text Node
				\draw (218.3,302.08) node [anchor=north west][inner sep=0.75pt]  [rotate=-48.04]  {$\sigma ( T_{u})$};
				% Text Node
				\draw (307.75,334) node [anchor=north west][inner sep=0.75pt]  [rotate=-315.77]  {$\sigma ( T_{u})$};
				% Text Node
				\draw (263,325.0) node [anchor=north west][inner sep=0.75pt]    {${\displaystyle \times n}$};
				% Text Node
				\draw (141,300.9) node [anchor=north west][inner sep=0.75pt]    {$\sigma ( T) :$};

			\end{tikzpicture}
			
			\caption{Inductive construction of $\sigma(T)$.}
			\label{fig:tree_cover}
		\end{figure}
	
		\begin{definition}[Symmetric Tree
			Covers]\label{d:symm_tree_cover}
			Let $T$ be a tree with at least $k \geq 1$ edges. We define the symmetric tree cover $\sigma(T)$ of $T$ as follows. 
			If $k=1$ then $\sigma(T) = T$.
			Otherwise let $u$ be a non-leaf vertex of $T$. Let $T_u$ be the tree obtained by collapsing every neighbor of $u$ to a single vertex and reattaching $u$ to this vertex. Finally join $\deg(u)$ copies of $\sigma(T_u)$ at $u$ and call the resulting tree $\sigma(T)$.
		\end{definition}
		A diagram of the induction is provided in Figure \ref{fig:tree_cover}. A concrete example is in Figure \ref{fig:tree_example}.
		
		\begin{figure}
			\tikzset{every picture/.style={line width=0.75pt}} %set default line width to 0.75pt        
			
			\begin{tikzpicture}[x=0.75pt,y=0.75pt,yscale=-1,xscale=1]
				%uncomment if require: \path (0,693); %set diagram left start at 0, and has height of 693
				
				%Straight Lines [id:da8099967786781045] 
				\draw [color={rgb, 255:red, 208; green, 2; blue, 27 }  ,draw opacity=1 ][fill={rgb, 255:red, 208; green, 2; blue, 27 }  ,fill opacity=1 ]   (123.49,280.62) -- (143.86,313.98) ;
				%Straight Lines [id:da34604767575127593] 
				\draw [color={rgb, 255:red, 208; green, 2; blue, 27 }  ,draw opacity=1 ][fill={rgb, 255:red, 208; green, 2; blue, 27 }  ,fill opacity=1 ]   (166.92,281.4) -- (143.88,312.98) ;
				%Straight Lines [id:da14800661035704] 
				\draw [color={rgb, 255:red, 0; green, 0; blue, 0 }  ,draw opacity=1 ][fill={rgb, 255:red, 0; green, 0; blue, 0 }  ,fill opacity=1 ]   (405.57,20.81) -- (424.88,54.79) ;
				%Straight Lines [id:da14475659066488566] 
				\draw [color={rgb, 255:red, 0; green, 0; blue, 0 }  ,draw opacity=1 ][fill={rgb, 255:red, 0; green, 0; blue, 0 }  ,fill opacity=1 ]   (441.93,20.63) -- (423.13,54.89) ;
				%Flowchart: Connector [id:dp05708682381461072] 
				\draw  [color={rgb, 255:red, 208; green, 2; blue, 27 }  ,draw opacity=1 ][fill={rgb, 255:red, 208; green, 2; blue, 27 }  ,fill opacity=1 ] (100.27,106.95) .. controls (100.27,105.27) and (101.68,103.91) .. (103.42,103.91) .. controls (105.16,103.91) and (106.57,105.27) .. (106.57,106.95) .. controls (106.57,108.64) and (105.16,110) .. (103.42,110) .. controls (101.68,110) and (100.27,108.64) .. (100.27,106.95) -- cycle ;
				%Flowchart: Connector [id:dp14556188171896445] 
				\draw  [color={rgb, 255:red, 208; green, 2; blue, 27 }  ,draw opacity=1 ][fill={rgb, 255:red, 208; green, 2; blue, 27 }  ,fill opacity=1 ] (143.05,136.69) .. controls (144.03,135.33) and (145.97,135.04) .. (147.39,136.05) .. controls (148.8,137.06) and (149.16,138.99) .. (148.18,140.36) .. controls (147.2,141.73) and (145.26,142.02) .. (143.84,141) .. controls (142.43,139.99) and (142.08,138.06) .. (143.05,136.69) -- cycle ;
				%Straight Lines [id:da7723172480237886] 
				\draw [color={rgb, 255:red, 208; green, 2; blue, 27 }  ,draw opacity=1 ][fill={rgb, 255:red, 208; green, 2; blue, 27 }  ,fill opacity=1 ]   (103.42,106.95) -- (142.5,106.41) ;
				%Straight Lines [id:da28595286144333243] 
				\draw [color={rgb, 255:red, 208; green, 2; blue, 27 }  ,draw opacity=1 ]   (144.84,105) -- (145.26,139.02) ;
				%Straight Lines [id:da15330643864011462] 
				\draw [line width=1.5]    (145,176.29) -- (145,254.5) ;
				\draw [shift={(145,257.5)}, rotate = 270] [color={rgb, 255:red, 0; green, 0; blue, 0 }  ][line width=1.5]    (14.21,-4.28) .. controls (9.04,-1.82) and (4.3,-0.39) .. (0,0) .. controls (4.3,0.39) and (9.04,1.82) .. (14.21,4.28)   ;
				%Flowchart: Connector [id:dp49072473037413156] 
				\draw  [color={rgb, 255:red, 208; green, 2; blue, 27 }  ,draw opacity=1 ][fill={rgb, 255:red, 208; green, 2; blue, 27 }  ,fill opacity=1 ] (145.09,69.35) .. controls (146.77,69.4) and (148.09,70.85) .. (148.04,72.59) .. controls (148,74.32) and (146.59,75.7) .. (144.91,75.65) .. controls (143.23,75.6) and (141.91,74.15) .. (141.96,72.41) .. controls (142,70.68) and (143.41,69.3) .. (145.09,69.35) -- cycle ;
				%Straight Lines [id:da531017335722215] 
				\draw [color={rgb, 255:red, 208; green, 2; blue, 27 }  ,draw opacity=1 ][fill={rgb, 255:red, 208; green, 2; blue, 27 }  ,fill opacity=1 ]   (145,72.5) -- (144.84,105) ;
				%Flowchart: Connector [id:dp8919363465442809] 
				\draw  [color={rgb, 255:red, 208; green, 2; blue, 27 }  ,draw opacity=1 ][fill={rgb, 255:red, 208; green, 2; blue, 27 }  ,fill opacity=1 ] (113.15,78.49) .. controls (114.27,77.24) and (116.23,77.16) .. (117.53,78.32) .. controls (118.83,79.48) and (118.97,81.44) .. (117.85,82.69) .. controls (116.73,83.95) and (114.77,84.02) .. (113.47,82.86) .. controls (112.17,81.7) and (112.03,79.75) .. (113.15,78.49) -- cycle ;
				%Straight Lines [id:da42541242825356207] 
				\draw [color={rgb, 255:red, 208; green, 2; blue, 27 }  ,draw opacity=1 ][fill={rgb, 255:red, 208; green, 2; blue, 27 }  ,fill opacity=1 ]   (115.5,80.59) -- (145,106.23) ;
				%Flowchart: Connector [id:dp9525215662767682] 
				\draw  [color={rgb, 255:red, 208; green, 2; blue, 27 }  ,draw opacity=1 ][fill={rgb, 255:red, 208; green, 2; blue, 27 }  ,fill opacity=1 ] (187.5,105.61) .. controls (187.47,107.29) and (186.03,108.62) .. (184.29,108.59) .. controls (182.55,108.56) and (181.17,107.17) .. (181.2,105.48) .. controls (181.23,103.8) and (182.67,102.47) .. (184.41,102.5) .. controls (186.15,102.53) and (187.53,103.93) .. (187.5,105.61) -- cycle ;
				%Straight Lines [id:da5118042179294175] 
				\draw [color={rgb, 255:red, 208; green, 2; blue, 27 }  ,draw opacity=1 ][fill={rgb, 255:red, 208; green, 2; blue, 27 }  ,fill opacity=1 ]   (184.35,105.55) -- (145.27,105.32) ;
				%Flowchart: Connector [id:dp9173023174452412] 
				\draw  [color={rgb, 255:red, 208; green, 2; blue, 27 }  ,draw opacity=1 ][fill={rgb, 255:red, 208; green, 2; blue, 27 }  ,fill opacity=1 ] (219.56,82.38) .. controls (220.44,83.81) and (219.96,85.71) .. (218.48,86.62) .. controls (217,87.53) and (215.08,87.11) .. (214.2,85.68) .. controls (213.31,84.25) and (213.8,82.35) .. (215.28,81.44) .. controls (216.76,80.52) and (218.68,80.94) .. (219.56,82.38) -- cycle ;
				%Straight Lines [id:da9819434853926123] 
				\draw [color={rgb, 255:red, 208; green, 2; blue, 27 }  ,draw opacity=1 ][fill={rgb, 255:red, 208; green, 2; blue, 27 }  ,fill opacity=1 ]   (216.88,84.03) -- (183.9,105) ;
				%Flowchart: Connector [id:dp7088174441956697] 
				\draw  [color={rgb, 255:red, 208; green, 2; blue, 27 }  ,draw opacity=1 ][fill={rgb, 255:red, 208; green, 2; blue, 27 }  ,fill opacity=1 ] (219.44,129.31) .. controls (218.45,130.67) and (216.51,130.95) .. (215.1,129.93) .. controls (213.69,128.91) and (213.35,126.98) .. (214.33,125.62) .. controls (215.32,124.25) and (217.26,123.98) .. (218.67,124.99) .. controls (220.08,126.01) and (220.42,127.95) .. (219.44,129.31) -- cycle ;
				%Straight Lines [id:da17895628065884517] 
				\draw [color={rgb, 255:red, 208; green, 2; blue, 27 }  ,draw opacity=1 ][fill={rgb, 255:red, 208; green, 2; blue, 27 }  ,fill opacity=1 ]   (216.89,127.46) -- (184.9,105) ;
				%Flowchart: Connector [id:dp5408682544110481] 
				\draw  [color={rgb, 255:red, 74; green, 144; blue, 226 }  ,draw opacity=1 ][fill={rgb, 255:red, 74; green, 144; blue, 226 }  ,fill opacity=1 ] (142.16,105.68) .. controls (142.16,104) and (143.57,102.64) .. (145.31,102.64) .. controls (147.05,102.64) and (148.46,104) .. (148.46,105.68) .. controls (148.46,107.36) and (147.05,108.73) .. (145.31,108.73) .. controls (143.57,108.73) and (142.16,107.36) .. (142.16,105.68) -- cycle ;
				%Shape: Boxed Line [id:dp3987597987087732] 
				\draw [line width=1.5]    (247.5,105) -- (325.71,104.25) ;
				\draw [shift={(328.71,104.22)}, rotate = 179.45] [color={rgb, 255:red, 0; green, 0; blue, 0 }  ][line width=1.5]    (14.21,-4.28) .. controls (9.04,-1.82) and (4.3,-0.39) .. (0,0) .. controls (4.3,0.39) and (9.04,1.82) .. (14.21,4.28)   ;
				%Straight Lines [id:da36170426005196454] 
				\draw [color={rgb, 255:red, 208; green, 2; blue, 27 }  ,draw opacity=1 ][fill={rgb, 255:red, 208; green, 2; blue, 27 }  ,fill opacity=1 ]   (383.15,92.83) -- (422.23,92.28) ;
				%Flowchart: Connector [id:dp42807158651497157] 
				\draw  [color={rgb, 255:red, 0; green, 0; blue, 0 }  ,draw opacity=1 ][fill={rgb, 255:red, 0; green, 0; blue, 0 }  ,fill opacity=1 ] (347.87,113.44) .. controls (347.06,111.96) and (347.63,110.09) .. (349.15,109.24) .. controls (350.67,108.4) and (352.57,108.91) .. (353.38,110.38) .. controls (354.2,111.85) and (353.62,113.73) .. (352.1,114.57) .. controls (350.58,115.42) and (348.69,114.91) .. (347.87,113.44) -- cycle ;
				%Straight Lines [id:da17304530711546806] 
				\draw [color={rgb, 255:red, 0; green, 0; blue, 0 }  ,draw opacity=1 ][fill={rgb, 255:red, 0; green, 0; blue, 0 }  ,fill opacity=1 ]   (350.63,111.91) -- (384.54,92.48) ;
				%Flowchart: Connector [id:dp5623409919887092] 
				\draw  [color={rgb, 255:red, 0; green, 0; blue, 0 }  ,draw opacity=1 ][fill={rgb, 255:red, 0; green, 0; blue, 0 }  ,fill opacity=1 ] (345.24,76.8) .. controls (345.91,75.25) and (347.74,74.56) .. (349.34,75.24) .. controls (350.94,75.93) and (351.69,77.74) .. (351.03,79.29) .. controls (350.36,80.83) and (348.53,81.53) .. (346.93,80.84) .. controls (345.33,80.15) and (344.58,78.34) .. (345.24,76.8) -- cycle ;
				%Straight Lines [id:da6477927799155149] 
				\draw [color={rgb, 255:red, 0; green, 0; blue, 0 }  ,draw opacity=1 ][fill={rgb, 255:red, 0; green, 0; blue, 0 }  ,fill opacity=1 ]   (347,77.48) -- (383.12,92.42) ;
				%Straight Lines [id:da7102911214956825] 
				\draw [color={rgb, 255:red, 208; green, 2; blue, 27 }  ,draw opacity=1 ][fill={rgb, 255:red, 208; green, 2; blue, 27 }  ,fill opacity=1 ]   (393.18,69.96) -- (425.12,92.48) ;
				%Flowchart: Connector [id:dp1592332962392169] 
				\draw  [color={rgb, 255:red, 0; green, 0; blue, 0 }  ,draw opacity=1 ][fill={rgb, 255:red, 0; green, 0; blue, 0 }  ,fill opacity=1 ] (352.6,59.72) .. controls (353.03,58.1) and (354.75,57.14) .. (356.43,57.59) .. controls (358.11,58.04) and (359.12,59.72) .. (358.69,61.34) .. controls (358.26,62.97) and (356.54,63.92) .. (354.86,63.48) .. controls (353.18,63.03) and (352.17,61.35) .. (352.6,59.72) -- cycle ;
				%Straight Lines [id:da07067957324111551] 
				\draw [color={rgb, 255:red, 0; green, 0; blue, 0 }  ,draw opacity=1 ][fill={rgb, 255:red, 0; green, 0; blue, 0 }  ,fill opacity=1 ]   (355.64,60.53) -- (391.13,69.45) -- (393.55,70.05) ;
				%Flowchart: Connector [id:dp8875994519109397] 
				\draw  [color={rgb, 255:red, 0; green, 0; blue, 0 }  ,draw opacity=1 ][fill={rgb, 255:red, 0; green, 0; blue, 0 }  ,fill opacity=1 ] (371.92,34.71) .. controls (373.37,33.85) and (375.26,34.37) .. (376.15,35.86) .. controls (377.04,37.36) and (376.59,39.27) .. (375.14,40.12) .. controls (373.7,40.98) and (371.81,40.47) .. (370.92,38.97) .. controls (370.03,37.48) and (370.48,35.57) .. (371.92,34.71) -- cycle ;
				%Straight Lines [id:da3641461947328124] 
				\draw [color={rgb, 255:red, 0; green, 0; blue, 0 }  ,draw opacity=1 ][fill={rgb, 255:red, 0; green, 0; blue, 0 }  ,fill opacity=1 ]   (373.53,37.42) -- (393.98,70.73) ;
				%Flowchart: Connector [id:dp6296512610885633] 
				\draw  [color={rgb, 255:red, 208; green, 2; blue, 27 }  ,draw opacity=1 ][fill={rgb, 255:red, 208; green, 2; blue, 27 }  ,fill opacity=1 ] (467.23,92.07) .. controls (467.23,93.75) and (465.81,95.11) .. (464.07,95.11) .. controls (462.33,95.1) and (460.93,93.74) .. (460.93,92.05) .. controls (460.93,90.37) and (462.35,89.01) .. (464.09,89.01) .. controls (465.83,89.02) and (467.23,90.38) .. (467.23,92.07) -- cycle ;
				%Flowchart: Connector [id:dp8250451997352584] 
				\draw  [color={rgb, 255:red, 208; green, 2; blue, 27 }  ,draw opacity=1 ][fill={rgb, 255:red, 208; green, 2; blue, 27 }  ,fill opacity=1 ] (494.26,63.49) .. controls (495.37,64.76) and (495.21,66.71) .. (493.9,67.86) .. controls (492.6,69.01) and (490.64,68.91) .. (489.53,67.65) .. controls (488.42,66.38) and (488.58,64.43) .. (489.89,63.28) .. controls (491.19,62.13) and (493.15,62.23) .. (494.26,63.49) -- cycle ;
				%Straight Lines [id:da6196563753877384] 
				\draw [color={rgb, 255:red, 208; green, 2; blue, 27 }  ,draw opacity=1 ][fill={rgb, 255:red, 208; green, 2; blue, 27 }  ,fill opacity=1 ]   (491.89,65.57) -- (462.88,91.76) ;
				%Flowchart: Connector [id:dp8525038061280763] 
				\draw  [color={rgb, 255:red, 208; green, 2; blue, 27 }  ,draw opacity=1 ][fill={rgb, 255:red, 208; green, 2; blue, 27 }  ,fill opacity=1 ] (496.87,118.07) .. controls (495.83,119.4) and (493.88,119.6) .. (492.51,118.53) .. controls (491.14,117.46) and (490.87,115.51) .. (491.91,114.19) .. controls (492.94,112.87) and (494.9,112.66) .. (496.27,113.73) .. controls (497.64,114.8) and (497.91,116.75) .. (496.87,118.07) -- cycle ;
				%Straight Lines [id:da6499385038170363] 
				\draw [color={rgb, 255:red, 208; green, 2; blue, 27 }  ,draw opacity=1 ][fill={rgb, 255:red, 208; green, 2; blue, 27 }  ,fill opacity=1 ]   (494.39,116.13) -- (463.27,92.48) ;
				%Straight Lines [id:da7472572388656977] 
				\draw [color={rgb, 255:red, 208; green, 2; blue, 27 }  ,draw opacity=1 ][fill={rgb, 255:red, 208; green, 2; blue, 27 }  ,fill opacity=1 ]   (425.2,131.56) -- (424.49,92.48) ;
				%Flowchart: Connector [id:dp3924543779696955] 
				\draw  [color={rgb, 255:red, 0; green, 0; blue, 0 }  ,draw opacity=1 ][fill={rgb, 255:red, 0; green, 0; blue, 0 }  ,fill opacity=1 ] (445.97,166.74) .. controls (444.5,167.57) and (442.62,167) .. (441.77,165.48) .. controls (440.92,163.97) and (441.42,162.07) .. (442.89,161.25) .. controls (444.36,160.43) and (446.24,160.99) .. (447.09,162.51) .. controls (447.94,164.03) and (447.44,165.92) .. (445.97,166.74) -- cycle ;
				%Straight Lines [id:da7462159299169788] 
				\draw [color={rgb, 255:red, 0; green, 0; blue, 0 }  ,draw opacity=1 ][fill={rgb, 255:red, 0; green, 0; blue, 0 }  ,fill opacity=1 ]   (444.43,164) -- (424.86,130.17) ;
				%Flowchart: Connector [id:dp30184578440074783] 
				\draw  [color={rgb, 255:red, 0; green, 0; blue, 0 }  ,draw opacity=1 ][fill={rgb, 255:red, 0; green, 0; blue, 0 }  ,fill opacity=1 ] (406.54,167.21) .. controls (405.07,166.39) and (404.56,164.5) .. (405.41,162.98) .. controls (406.25,161.46) and (408.13,160.89) .. (409.6,161.71) .. controls (411.07,162.53) and (411.58,164.42) .. (410.73,165.94) .. controls (409.88,167.46) and (408,168.03) .. (406.54,167.21) -- cycle ;
				%Straight Lines [id:da38877974320365716] 
				\draw [color={rgb, 255:red, 0; green, 0; blue, 0 }  ,draw opacity=1 ][fill={rgb, 255:red, 0; green, 0; blue, 0 }  ,fill opacity=1 ]   (408.07,164.46) -- (426.61,130.06) ;
				%Flowchart: Connector [id:dp37012283848834204] 
				\draw  [color={rgb, 255:red, 208; green, 2; blue, 27 }  ,draw opacity=1 ][fill={rgb, 255:red, 208; green, 2; blue, 27 }  ,fill opacity=1 ] (424.56,50.25) .. controls (426.24,50.25) and (427.6,51.67) .. (427.59,53.41) .. controls (427.59,55.15) and (426.22,56.55) .. (424.54,56.55) .. controls (422.85,56.54) and (421.49,55.13) .. (421.5,53.39) .. controls (421.51,51.65) and (422.87,50.24) .. (424.56,50.25) -- cycle ;
				%Flowchart: Connector [id:dp889076716342786] 
				\draw  [color={rgb, 255:red, 0; green, 0; blue, 0 }  ,draw opacity=1 ][fill={rgb, 255:red, 0; green, 0; blue, 0 }  ,fill opacity=1 ] (404.05,18.05) .. controls (405.53,17.24) and (407.4,17.82) .. (408.24,19.34) .. controls (409.08,20.87) and (408.56,22.76) .. (407.09,23.57) .. controls (405.61,24.38) and (403.74,23.8) .. (402.9,22.28) .. controls (402.06,20.75) and (402.58,18.86) .. (404.05,18.05) -- cycle ;
				%Flowchart: Connector [id:dp2824166261047877] 
				\draw  [color={rgb, 255:red, 0; green, 0; blue, 0 }  ,draw opacity=1 ][fill={rgb, 255:red, 0; green, 0; blue, 0 }  ,fill opacity=1 ] (443.49,17.89) .. controls (444.95,18.72) and (445.44,20.61) .. (444.58,22.13) .. controls (443.72,23.64) and (441.84,24.2) .. (440.38,23.37) .. controls (438.92,22.54) and (438.43,20.64) .. (439.28,19.12) .. controls (440.14,17.61) and (442.02,17.06) .. (443.49,17.89) -- cycle ;
				%Straight Lines [id:da8921543503186792] 
				\draw [color={rgb, 255:red, 208; green, 2; blue, 27 }  ,draw opacity=1 ][fill={rgb, 255:red, 208; green, 2; blue, 27 }  ,fill opacity=1 ]   (464.08,92.06) -- (425,92.52) ;
				%Straight Lines [id:da4198847968490884] 
				\draw [color={rgb, 255:red, 208; green, 2; blue, 27 }  ,draw opacity=1 ][fill={rgb, 255:red, 208; green, 2; blue, 27 }  ,fill opacity=1 ]   (424.55,53.4) -- (424.96,92.48) ;
				%Flowchart: Connector [id:dp4201402182924231] 
				\draw  [color={rgb, 255:red, 74; green, 144; blue, 226 }  ,draw opacity=1 ][fill={rgb, 255:red, 74; green, 144; blue, 226 }  ,fill opacity=1 ] (421.23,92.28) .. controls (421.23,90.6) and (422.64,89.23) .. (424.38,89.23) .. controls (426.12,89.23) and (427.53,90.6) .. (427.53,92.28) .. controls (427.53,93.96) and (426.12,95.33) .. (424.38,95.33) .. controls (422.64,95.33) and (421.23,93.96) .. (421.23,92.28) -- cycle ;
				%Flowchart: Connector [id:dp8272249152016381] 
				\draw  [color={rgb, 255:red, 208; green, 2; blue, 27 }  ,draw opacity=1 ][fill={rgb, 255:red, 208; green, 2; blue, 27 }  ,fill opacity=1 ] (390.63,68.1) .. controls (391.62,66.74) and (393.56,66.47) .. (394.97,67.49) .. controls (396.38,68.51) and (396.72,70.45) .. (395.73,71.81) .. controls (394.74,73.17) and (392.8,73.44) .. (391.39,72.42) .. controls (389.98,71.4) and (389.64,69.47) .. (390.63,68.1) -- cycle ;
				%Flowchart: Connector [id:dp5532385430317558] 
				\draw  [color={rgb, 255:red, 208; green, 2; blue, 27 }  ,draw opacity=1 ][fill={rgb, 255:red, 208; green, 2; blue, 27 }  ,fill opacity=1 ] (380,92.83) .. controls (380,91.14) and (381.41,89.78) .. (383.15,89.78) .. controls (384.89,89.78) and (386.3,91.14) .. (386.3,92.83) .. controls (386.3,94.51) and (384.89,95.87) .. (383.15,95.87) .. controls (381.41,95.87) and (380,94.51) .. (380,92.83) -- cycle ;
				%Flowchart: Connector [id:dp7934610742504759] 
				\draw  [color={rgb, 255:red, 208; green, 2; blue, 27 }  ,draw opacity=1 ][fill={rgb, 255:red, 208; green, 2; blue, 27 }  ,fill opacity=1 ] (425.22,134.71) .. controls (423.54,134.71) and (422.17,133.31) .. (422.16,131.57) .. controls (422.15,129.83) and (423.51,128.41) .. (425.19,128.41) .. controls (426.87,128.4) and (428.24,129.8) .. (428.25,131.54) .. controls (428.26,133.28) and (426.9,134.7) .. (425.22,134.71) -- cycle ;
				%Straight Lines [id:da9052807739207012] 
				\draw [color={rgb, 255:red, 208; green, 2; blue, 27 }  ,draw opacity=1 ]   (143.61,311.63) -- (144.03,345.65) ;
				%Flowchart: Connector [id:dp12943812146479472] 
				\draw  [color={rgb, 255:red, 74; green, 144; blue, 226 }  ,draw opacity=1 ][fill={rgb, 255:red, 74; green, 144; blue, 226 }  ,fill opacity=1 ] (140.39,312.31) .. controls (140.39,310.63) and (141.8,309.27) .. (143.54,309.27) .. controls (145.28,309.27) and (146.69,310.63) .. (146.69,312.31) .. controls (146.69,313.99) and (145.28,315.36) .. (143.54,315.36) .. controls (141.8,315.36) and (140.39,313.99) .. (140.39,312.31) -- cycle ;
				%Flowchart: Connector [id:dp8124566681023913] 
				\draw  [color={rgb, 255:red, 128; green, 128; blue, 128 }  ,draw opacity=1 ][fill={rgb, 255:red, 128; green, 128; blue, 128 }  ,fill opacity=1 ] (141.82,343.32) .. controls (142.8,341.96) and (144.74,341.67) .. (146.16,342.68) .. controls (147.57,343.69) and (147.92,345.62) .. (146.95,346.99) .. controls (145.97,348.36) and (144.03,348.65) .. (142.61,347.63) .. controls (141.2,346.62) and (140.84,344.69) .. (141.82,343.32) -- cycle ;
				%Straight Lines [id:da22277641934142844] 
				\draw [line width=1.5]    (143.5,372.03) -- (143.5,450.24) ;
				\draw [shift={(143.5,453.24)}, rotate = 270] [color={rgb, 255:red, 0; green, 0; blue, 0 }  ][line width=1.5]    (14.21,-4.28) .. controls (9.04,-1.82) and (4.3,-0.39) .. (0,0) .. controls (4.3,0.39) and (9.04,1.82) .. (14.21,4.28)   ;
				%Flowchart: Connector [id:dp6545389397047029] 
				\draw  [color={rgb, 255:red, 208; green, 2; blue, 27 }  ,draw opacity=1 ][fill={rgb, 255:red, 208; green, 2; blue, 27 }  ,fill opacity=1 ] (121.89,277.91) .. controls (123.33,277.06) and (125.23,277.58) .. (126.11,279.07) .. controls (127,280.57) and (126.54,282.48) .. (125.09,283.33) .. controls (123.65,284.19) and (121.75,283.67) .. (120.87,282.17) .. controls (119.98,280.68) and (120.44,278.77) .. (121.89,277.91) -- cycle ;
				%Flowchart: Connector [id:dp11132921054744815] 
				\draw  [color={rgb, 255:red, 208; green, 2; blue, 27 }  ,draw opacity=1 ][fill={rgb, 255:red, 208; green, 2; blue, 27 }  ,fill opacity=1 ] (168.81,278.89) .. controls (170.15,279.9) and (170.4,281.84) .. (169.35,283.23) .. controls (168.31,284.63) and (166.37,284.93) .. (165.02,283.92) .. controls (163.68,282.91) and (163.44,280.97) .. (164.48,279.58) .. controls (165.53,278.18) and (167.46,277.88) .. (168.81,278.89) -- cycle ;
				%Straight Lines [id:da35850221725657305] 
				\draw [color={rgb, 255:red, 208; green, 2; blue, 27 }  ,draw opacity=1 ]   (143.22,479.13) -- (143.64,513.15) ;
				%Flowchart: Connector [id:dp6771804100567662] 
				\draw  [color={rgb, 255:red, 74; green, 144; blue, 226 }  ,draw opacity=1 ][fill={rgb, 255:red, 74; green, 144; blue, 226 }  ,fill opacity=1 ] (140,479.81) .. controls (140,478.13) and (141.41,476.77) .. (143.15,476.77) .. controls (144.89,476.77) and (146.3,478.13) .. (146.3,479.81) .. controls (146.3,481.49) and (144.89,482.86) .. (143.15,482.86) .. controls (141.41,482.86) and (140,481.49) .. (140,479.81) -- cycle ;
				%Flowchart: Connector [id:dp5020952457179446] 
				\draw  [color={rgb, 255:red, 128; green, 128; blue, 128 }  ,draw opacity=1 ][fill={rgb, 255:red, 128; green, 128; blue, 128 }  ,fill opacity=1 ] (141.43,510.82) .. controls (142.41,509.46) and (144.35,509.17) .. (145.77,510.18) .. controls (147.18,511.19) and (147.53,513.12) .. (146.56,514.49) .. controls (145.58,515.86) and (143.64,516.15) .. (142.22,515.13) .. controls (140.81,514.12) and (140.45,512.19) .. (141.43,510.82) -- cycle ;
				%Shape: Boxed Line [id:dp26938267337121435] 
				\draw [line width=1.5]    (248.29,492.24) -- (326.5,491.49) ;
				\draw [shift={(329.5,491.46)}, rotate = 179.45] [color={rgb, 255:red, 0; green, 0; blue, 0 }  ][line width=1.5]    (14.21,-4.28) .. controls (9.04,-1.82) and (4.3,-0.39) .. (0,0) .. controls (4.3,0.39) and (9.04,1.82) .. (14.21,4.28)   ;
				%Shape: Boxed Line [id:dp35879795703929296] 
				\draw [line width=1.5]    (250.5,305.74) -- (328.71,304.99) ;
				\draw [shift={(331.71,304.96)}, rotate = 179.45] [color={rgb, 255:red, 0; green, 0; blue, 0 }  ][line width=1.5]    (14.21,-4.28) .. controls (9.04,-1.82) and (4.3,-0.39) .. (0,0) .. controls (4.3,0.39) and (9.04,1.82) .. (14.21,4.28)   ;
				%Straight Lines [id:da7482359056661124] 
				\draw [color={rgb, 255:red, 208; green, 2; blue, 27 }  ,draw opacity=1 ]   (426.11,471.63) -- (426.53,505.65) ;
				%Flowchart: Connector [id:dp18827551403084308] 
				\draw  [color={rgb, 255:red, 74; green, 144; blue, 226 }  ,draw opacity=1 ][fill={rgb, 255:red, 74; green, 144; blue, 226 }  ,fill opacity=1 ] (422.89,472.31) .. controls (422.89,470.63) and (424.3,469.27) .. (426.04,469.27) .. controls (427.78,469.27) and (429.19,470.63) .. (429.19,472.31) .. controls (429.19,473.99) and (427.78,475.36) .. (426.04,475.36) .. controls (424.3,475.36) and (422.89,473.99) .. (422.89,472.31) -- cycle ;
				%Flowchart: Connector [id:dp2349212595738327] 
				\draw  [color={rgb, 255:red, 128; green, 128; blue, 128 }  ,draw opacity=1 ][fill={rgb, 255:red, 128; green, 128; blue, 128 }  ,fill opacity=1 ] (424.32,503.32) .. controls (425.3,501.96) and (427.24,501.67) .. (428.66,502.68) .. controls (430.07,503.69) and (430.42,505.62) .. (429.45,506.99) .. controls (428.47,508.36) and (426.53,508.65) .. (425.11,507.63) .. controls (423.7,506.62) and (423.34,504.69) .. (424.32,503.32) -- cycle ;
				%Straight Lines [id:da5545803824166718] 
				\draw [color={rgb, 255:red, 208; green, 2; blue, 27 }  ,draw opacity=1 ][fill={rgb, 255:red, 208; green, 2; blue, 27 }  ,fill opacity=1 ]   (403.49,278.12) -- (423.86,311.48) ;
				%Straight Lines [id:da9806504649318554] 
				\draw [color={rgb, 255:red, 208; green, 2; blue, 27 }  ,draw opacity=1 ][fill={rgb, 255:red, 208; green, 2; blue, 27 }  ,fill opacity=1 ]   (446.92,278.9) -- (423.88,310.48) ;
				%Straight Lines [id:da9936864230599004] 
				\draw [color={rgb, 255:red, 208; green, 2; blue, 27 }  ,draw opacity=1 ]   (423.61,309.13) -- (424.03,343.15) ;
				%Flowchart: Connector [id:dp5280002035117045] 
				\draw  [color={rgb, 255:red, 74; green, 144; blue, 226 }  ,draw opacity=1 ][fill={rgb, 255:red, 74; green, 144; blue, 226 }  ,fill opacity=1 ] (420.39,309.81) .. controls (420.39,308.13) and (421.8,306.77) .. (423.54,306.77) .. controls (425.28,306.77) and (426.69,308.13) .. (426.69,309.81) .. controls (426.69,311.49) and (425.28,312.86) .. (423.54,312.86) .. controls (421.8,312.86) and (420.39,311.49) .. (420.39,309.81) -- cycle ;
				%Flowchart: Connector [id:dp5052511106029973] 
				\draw  [color={rgb, 255:red, 128; green, 128; blue, 128 }  ,draw opacity=1 ][fill={rgb, 255:red, 128; green, 128; blue, 128 }  ,fill opacity=1 ] (421.82,340.82) .. controls (422.8,339.46) and (424.74,339.17) .. (426.16,340.18) .. controls (427.57,341.19) and (427.92,343.12) .. (426.95,344.49) .. controls (425.97,345.86) and (424.03,346.15) .. (422.61,345.13) .. controls (421.2,344.12) and (420.84,342.19) .. (421.82,340.82) -- cycle ;
				%Flowchart: Connector [id:dp6807500246416848] 
				\draw  [color={rgb, 255:red, 208; green, 2; blue, 27 }  ,draw opacity=1 ][fill={rgb, 255:red, 208; green, 2; blue, 27 }  ,fill opacity=1 ] (401.89,275.41) .. controls (403.33,274.56) and (405.23,275.08) .. (406.11,276.57) .. controls (407,278.07) and (406.54,279.98) .. (405.09,280.83) .. controls (403.65,281.69) and (401.75,281.17) .. (400.87,279.67) .. controls (399.98,278.18) and (400.44,276.27) .. (401.89,275.41) -- cycle ;
				%Flowchart: Connector [id:dp6620267637682017] 
				\draw  [color={rgb, 255:red, 208; green, 2; blue, 27 }  ,draw opacity=1 ][fill={rgb, 255:red, 208; green, 2; blue, 27 }  ,fill opacity=1 ] (448.81,276.39) .. controls (450.15,277.4) and (450.4,279.34) .. (449.35,280.73) .. controls (448.31,282.13) and (446.37,282.43) .. (445.02,281.42) .. controls (443.68,280.41) and (443.44,278.47) .. (444.48,277.08) .. controls (445.53,275.68) and (447.46,275.38) .. (448.81,276.39) -- cycle ;
				%Shape: Boxed Line [id:dp3713715761038764] 
				\draw [line width=1.5]    (425,445) -- (424.22,366.79) ;
				\draw [shift={(424.19,363.79)}, rotate = 89.43] [color={rgb, 255:red, 0; green, 0; blue, 0 }  ][line width=1.5]    (14.21,-4.28) .. controls (9.04,-1.82) and (4.3,-0.39) .. (0,0) .. controls (4.3,0.39) and (9.04,1.82) .. (14.21,4.28)   ;
				%Shape: Boxed Line [id:dp41684758087661944] 
				\draw [line width=1.5]    (425.81,261.21) -- (425.03,183) ;
				\draw [shift={(425,180)}, rotate = 89.43] [color={rgb, 255:red, 0; green, 0; blue, 0 }  ][line width=1.5]    (14.21,-4.28) .. controls (9.04,-1.82) and (4.3,-0.39) .. (0,0) .. controls (4.3,0.39) and (9.04,1.82) .. (14.21,4.28)   ;
				
				\definecolor{Blue}{RGB}{0.29,0.56,0.89}
				% \definecolor{Azure}{RGB}{0, 127, 255}
				% \definecolor{Azure}{RGB}{0, 127, 255}
				%$T_{\textcolor{Blue}{\smblkcircle }}$
				% Text Node
				\draw (110,201.19) node [anchor=north west][inner sep=0.75pt]    {$T_{\textcolor[rgb]{0.29,0.56,0.89}{\bullet}}$};
				% Text Node
				\draw (283,79.9) node [anchor=north west][inner sep=0.75pt]    {$\sigma $};
				% Text Node
				\draw (108.5,396.93) node [anchor=north west][inner sep=0.75pt]    {$T_{\textcolor[rgb]{0.29,0.56,0.89}{\bullet}}$};
				% Text Node
				\draw (283.79,467.14) node [anchor=north west][inner sep=0.75pt]    {$\sigma $};
				% Text Node
				\draw (286,280.64) node [anchor=north west][inner sep=0.75pt]    {$\sigma $};
				% Text Node
				\draw (376,389.9) node [anchor=north west][inner sep=0.75pt]  [font=\large]  {$\bowtie \textcolor[rgb]{0.5,0.5,0.5}{_{\bullet}}$};
				% Text Node
				\draw (371,207.4) node [anchor=north west][inner sep=0.75pt]  [font=\large]  {$\bowtie \textcolor[rgb]{0.5,0.5,0.5}{_{\bullet}}$};

			\end{tikzpicture}
			
			\caption{Unwrapping the induction for a concrete tree. Downward arrows are the construction of the $T_u$'s as in Definition \ref{d:symm_tree_cover}.
				Rightward arrows are the $\sigma$ operation. 
				Upward arrows are the joining operation as in the last step of Definition \ref{d:symm_tree_cover}.
				Edges in the upper left graph are tracked in red.}
			\label{fig:tree_example}
		\end{figure}
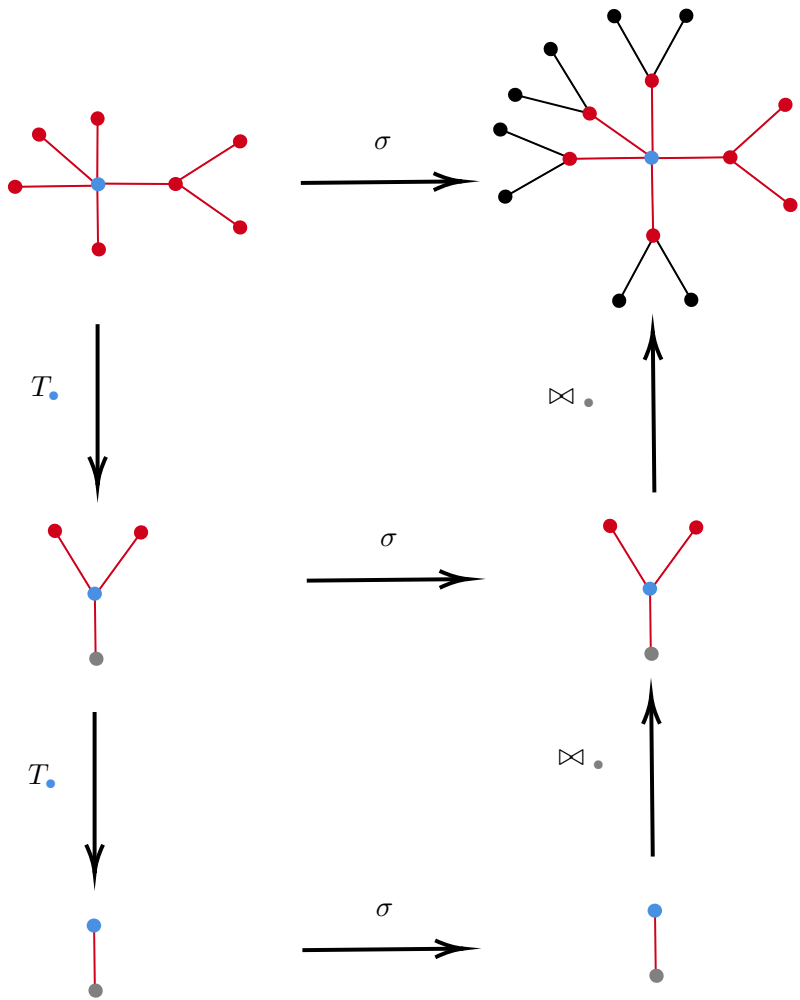

		Note that the symmetric tree covering can depend on the choice of pivot at each stage, but we have no need for uniqueness.
		It is not difficult to establish the following properties of $\sigma(T)$.
		\begin{lemma}
			$T \subset \sigma(T)$ and $\sigma(T)$ is finite.
		\end{lemma}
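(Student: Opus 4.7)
The plan is to prove both claims by strong induction on the number of edges $k$ of $T$, running in parallel with the inductive construction in Definition \ref{d:symm_tree_cover}. The base case $k=1$ is immediate: $\sigma(T) = T$ is itself a single edge, hence finite and trivially contains $T$.

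For the inductive step, fix a tree $T$ with $k \geq 2$ edges and a non-leaf vertex $u$ (which exists since any tree with at least two edges has an internal vertex). Write $v_1,\dots,v_n$ for the neighbors of $u$, with $n = \deg(u) \geq 2$, and let $T_1,\dots,T_n$ be the subtrees of $T \setminus \{u\}$ rooted at the $v_i$. The construction of $T_u$ identifies $v_1,\dots,v_n$ into a single vertex $v$ adjacent to $u$, leaving each $T_i$ attached to $v$. Counting, $T_u$ has $|V(T)| - (n-1)$ vertices and hence $k - n + 1 \leq k-1$ edges, which is the key point that makes the induction terminate. By the inductive hypothesis, $\sigma(T_u)$ is finite and contains $T_u$.

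Finiteness of $\sigma(T)$ is then straightforward: it is defined as the join of $n$ copies of the finite graph $\sigma(T_u)$ at a single vertex $u$, so its vertex set and edge set are each finite unions of finite sets.

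For the embedding $T \subset \sigma(T)$, the idea is to embed each branch of $T$ at $u$ into a separate copy of $\sigma(T_u)$ and then glue at $u$. More precisely, within $T_u$ let $B_i$ denote the subgraph consisting of the edge $uv$ together with $T_i$ (attached at $v$); as an abstract graph $B_i$ is isomorphic to the $i$-th branch of $T$, namely the edge $uv_i$ together with $T_i$ attached at $v_i$. Since $B_i \subset T_u \subset \sigma(T_u)$, the $i$-th copy of $\sigma(T_u)$ appearing in $\sigma(T)$ contains an isomorphic copy of this $i$-th branch, with the role of $u$ played by the shared joining vertex. As the copies of $\sigma(T_u)$ intersect only at $u$, these $n$ branch-embeddings are compatible, and together they produce an injective graph homomorphism $T \hookrightarrow \sigma(T)$. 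The only mildly delicate point is tracking the identification at $u$, but this is handled directly by the joining operation, so I do not anticipate a genuine obstacle.
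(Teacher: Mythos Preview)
Your proposal is correct and follows essentially the same approach as the paper: strong induction on the number of edges, using that $T_u$ has $k-\deg(u)+1<k$ edges so the inductive hypothesis applies, and then embedding each branch of $T$ at $u$ into its own copy of $\sigma(T_u)\supset T_u$ inside $\sigma(T)$. Your write-up is in fact more explicit than the paper's about how the branches $B_i$ sit inside $T_u$ and how the gluing at $u$ assembles them into a copy of $T$, but the strategy is the same.
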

		\begin{proof}
			We proceed by induction. If $T$ is a single edge we are done. Suppose that $T$ has $k \geq 2$ edges and $T' \subset \sigma(T')$ for any tree $T'$ on fewer than $k$ edges. $T_u$ contains $k - \delta(u) + 1$ edges, and $\delta(u) > 1$ as we can take $u$ to be a non-leaf vertex. Thus $T_u \subset \sigma(T_u)$. $\sigma(T_u)$ contains a copy of $T_u$ for each vertex, so it contains each connected component of $T\setminus u$ connected to $u$, giving $T \subset \sigma(T)$. 
			Also $\sigma(T)$ is finite as $\sigma(T_u)$ is by induction finite. 
		\end{proof}
		
		The base case when $T$ has a single edge is contained in a paper by A. Greenleaf, A. Iosevich, and K. Taylor \cite{nonempty_interior_radon}. We give the theorem in our notation below, letting $e$ denote an edge.
		\begin{theorem}[Greenleaf--Iosevich--Taylor \cite{nonempty_interior_radon}]
			Let $\mu$ have exponent $s > (d+1)/2$. Then there is an open interval $I$ such that
			\begin{equation*}
				\cV^\epsilon_{e,t} = \ang{\cR_t^\epsilon \mu, \mu} \gtrsim 1
			\end{equation*}
			independently of $\epsilon > 0$ and $t \in I$. 
		\end{theorem}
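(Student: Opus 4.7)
My plan is to realize $\cV^\epsilon_{e,t}$ as the convolution of $\rho^\epsilon$ with the pushforward measure $\nu = \Phi_*(\mu \otimes \mu)$, where $\Phi(x,y) = x \cdot y$. Indeed,
\[
\cV^\epsilon_{e,t} = \iint \rho^\epsilon(x \cdot y - t)\, d\mu(x)\, d\mu(y) = (\rho^\epsilon * \nu)(t).
\]
The goal then becomes to show that $\nu$ possesses a continuous density $g$ which is bounded below by some $c_0 > 0$ on an open interval $I$. Granted this, $(\rho^\epsilon * \nu)(t) \to g(t)$ uniformly on $I$ as $\epsilon \to 0$ (since $g$ is uniformly continuous on a neighborhood of $I$), so $\cV^\epsilon_{e,t} \geq c_0/2$ for $t \in I$ once $\epsilon$ is sufficiently small; for the remaining bounded range of $\epsilon$ the positivity is routine since $\rho^\epsilon \geq 0$ and the estimate is an open condition.

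To produce the continuous density, I would compute
\[
\hat{\nu}(\tau) = \iint e^{-i\tau x \cdot y}\, d\mu(x)\, d\mu(y) = \int \overline{\hat{\mu}(\tau y)}\, d\mu(y),
\]
and show $\hat{\nu} \in L^1(\R)$, which by Fourier inversion yields $g \in C^0(\R)$. The route I would take is Cauchy--Schwarz with a weight, reducing the matter to controlling
\[
\int |\hat{\nu}(\tau)|^2 (1 + |\tau|)^{2\alpha}\, d\tau \lesssim \int d\mu(y) \int |\hat{\mu}(\tau y)|^2 (1 + |\tau|)^{2\alpha}\, d\tau
\]
for some $\alpha > 1/2$. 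Changing variables $\xi = \tau y$ along the ray through $y$ (valid since $|y| \asymp 1$ on $\supp\mu$ after the reduction in Section \ref{sec:reductions}) and invoking the Frostman energy bound $\int |\hat{\mu}(\xi)|^2 |\xi|^{t-d}\, d\xi < \infty$ for $t < s$ should close the estimate.

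Once $g$ is continuous, the existence of an interval where $g$ is positive is automatic: $\Phi(\supp\mu \times \supp\mu)$ is a compact interval, $\nu$ is a probability measure supported there, so $\int g = 1$, $g$ cannot vanish identically, and continuity supplies an interval $I$ on which $g \geq c_0 > 0$. The main obstacle is precisely the Fourier-decay step: the Frostman hypothesis only gives \emph{spherical-average} decay of $\hat\mu$, and to convert it into a one-dimensional $L^1$ bound for $\hat{\nu}$ one must exploit the non-degeneracy of the dot product, specifically that $\nabla_y(x \cdot y) = x$ never vanishes on $\supp\mu$, equivalently that the mixed Hessian $\partial_{x_i}\partial_{y_j}(x \cdot y) = \delta_{ij}$ is nondegenerate. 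This curvature input is precisely what powers the $L^2(\mu)$ bound of Theorem \ref{t:L2_mu_radon}, and I expect the same non-degeneracy (via a stationary-phase or Sobolev-mapping argument for the Radon transform) to close the remaining gap when $s$ is only slightly above $(d+1)/2$.
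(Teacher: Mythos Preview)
The paper does not prove this theorem; it is quoted from \cite{nonempty_interior_radon} and used as the base case for Theorem~\ref{t:lower_bound}, so there is no in-paper argument to compare against. Your overall strategy---realize $\cV^\epsilon_{e,t}=(\rho^\epsilon*\nu)(t)$ for the pushforward $\nu=\Phi_*(\mu\otimes\mu)$, show $\nu$ has a continuous density $g$, then locate an interval where $g>0$---is exactly the framework of the cited reference.

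The genuine gap is the step where you ``change variables $\xi=\tau y$ along the ray and invoke the Frostman energy bound.'' After Cauchy--Schwarz and the substitution you are left with
\[
\int \left(\int_{\R}|\hat\mu(r\omega_y)|^{2}(1+|r|)^{2\alpha}\,dr\right)d\mu(y),\qquad \omega_y=y/|y|,
\]
an average of \emph{line integrals} of $|\hat\mu|^2$ over directions $\omega_y$ weighted by $\mu$. The energy condition $\int_{\R^d}|\hat\mu(\xi)|^{2}|\xi|^{t-d}\,d\xi<\infty$ controls only the $d\sigma$-average of such line integrals over $S^{d-1}$; since the radial projection of $\mu$ onto the sphere is typically singular with respect to $\sigma$, the bound does not transfer. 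You flag this correctly in your last paragraph, but the route as written does not close.

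Your instinct about the fix is right. Rather than forcing $\hat\nu\in L^1$ from bare energy, one uses the Sobolev smoothing of the Radon transform (Theorem~\ref{t:Radon_sobolev}) together with Lemma~\ref{l:wolff_lemma}: the Frostman condition gives $\int|\hat\mu(\xi)|^2(1+|\xi|^2)^{-\beta}\,d\xi<\infty$ for every $\beta>(d-s)/2$, and $\cR_t$ gains $(d-1)/2$ derivatives, so the pairing $g(t)=\ang{\cR_t\mu,\mu}$ is well-defined and continuous in $t$ once $(d-1)/2\geq 2\beta$, i.e.\ exactly when $s>(d+1)/2$. With $g$ continuous, your endgame (positivity somewhere, hence on an interval) goes through unchanged.
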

		%We give the proof in the special case of dot products in the appendix. 
		
		Consider when $T$ has $k \geq 2$ edges and let $T_u$ be as
		in Definition \ref{d:symm_tree_cover}. Note that the disjoint copies of $T_u$ are common only in $u$. We have
		\begin{equation*}
			\cV_{\sigma(T), t}^\epsilon(\mu) = \int 
			\left( \int \cdots \int \prod_{(i,j) \in \E(\sigma(T_u))}  \rho^\epsilon(x_i \cdot x_j - t) d\mu(x_1) \cdots d\mu(x_{k'}) \right)^{\deg(u)} d\mu(u),
		\end{equation*}
		where we are abusing notation and letting $u$ represent the vertex as embedded in $E$ and as in the abstract graph $T$. Here $x_1,\ldots, x_{k'}$ are the vertices in $T_u$ excluding $u$. Since $\mu$ is a probability measure, Holder gives
		\begin{align*}
			\int &\left(\int \cdots \int \prod_{(i,j) \in \E(\sigma(T_u))}  \rho^\epsilon(x_i \cdot x_j - t) d\mu(x_1) \cdots d\mu(x_{k'}) \right)^{\deg(u)} d\mu(u) \\
			&\geq \left(\int\int \cdots \int  \prod_{(i,j) \in \E(\sigma(T_u))} \rho^\epsilon(x_i \cdot x_j - t) d\mu(x_1) \cdots d\mu(x_{k'}) d\mu(u) \right)^{\deg(u)}\\
			&= (\cV_{\sigma(T_u), t}^\epsilon(\mu))^{\deg(u)} \gtrsim 1,
		\end{align*}
		where the last line is by induction. \qed
		
		\subsection{Proof of Corollary \ref{c:equal_embeddings}}
			Set
			\begin{equation*}
				K_n = \{(x_1,\ldots, x_{k+1}) \in E^{k+1} : t - 1/n \leq x_i \cdot x_j \leq t + 1/n, (i,j) \in \E(T)\}. 
			\end{equation*}
			Then the $K_n$ are nested, non-increasing, and
			\begin{equation*}
				\bigcap_{n \geq 1} K_n = \sigma(T)_t(E). 
			\end{equation*}
			Consider $\Phi(x_1,\ldots, x_{k+1}) = (x_i \cdot x_j)_{(i,j) \in E(T)}$,
			which is continuous as a function $\R^{k+1} \to \R^k$.
			Then $K_n = E^{k+1} \cap \Phi^{-1}([t-1/n, t+1/n])$ is compact, being the intersection of a compact set and a closed set. By Theorem \ref{t:lower_bound}
			$\mu(K_n) \geq c/n^k > 0$, so $K_n$ is nonempty. By Cantor's intersection theorem
			$\sigma(T)_t(E)$ is thus nonempty. 
			By the inclusion of $T$ in $\sigma(T)$, $T_t(E)$ is nonempty. 
			\qed

	\section{Appendix}
	
		\subsection{The Radon Transform} \label{subsec:radon}
		As alluded to in Section \ref{sec:upper}, we have a family of Radon transforms $\cR_t^\epsilon$ given by 
		\begin{equation*}
			\cR_t^\epsilon f(x) = \int f(y) \rho^\epsilon(x \cdot y - t) dy.
		\end{equation*}
		When $\mu$ is a Borel measure we write 
		\begin{equation*}
			\cR_t^\epsilon(f\mu)(x) = \int f(y) \rho^\epsilon(x \cdot y - t) d\mu(y).
		\end{equation*}
		The following is a well-known mapping property of $\cR^\epsilon_t$. See \cite{PhongSteinRadon} for the original argument. 
		For a source with more background, see Stein and Shakarchi's book on functional analysis \cite{steinshakfunctional}. 
		\begin{theorem} \label{t:Radon_sobolev}
			$\cR_t^\epsilon$ is a bounded linear operator $L^2(\R^d) \to L^2_{(d-1)/2}(\R^d)$ with 
			\begin{equation*}
				\norma{\cR_t^\epsilon f}_{L_{(d-1)/2}^2(\R^d)} \lesssim \norma{f}_{L^2(\R^d)}
			\end{equation*}
			independently of $\epsilon > 0$ and for $t \approx 1$. 
		\end{theorem}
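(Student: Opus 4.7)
The plan is to realize $\cR_t^\epsilon$ as a Fourier integral operator with nondegenerate phase and then invoke the classical Phong--Stein Sobolev estimate cited in the theorem. First I would Fourier-invert the bump, writing $\rho^\epsilon(s)=\int \hat\rho(\epsilon\xi)e^{2\pi i s\xi}\,d\xi$, which presents
\[
\cR_t^\epsilon f(x)=\int\!\!\int f(y)\,\hat\rho(\epsilon\xi)\,e^{2\pi i(x\cdot y-t)\xi}\,dy\,d\xi
\]
as an oscillatory integral with one-dimensional phase variable $\xi$, phase $\phi(x,y,\xi)=(x\cdot y-t)\xi$, and amplitude $a_\epsilon(\xi)=\hat\rho(\epsilon\xi)$. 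The mixed Hessian $\partial_x\partial_y\phi=\xi I_d$ has determinant $\xi^d$, so the phase is nondegenerate on $\{\xi\neq 0\}$, and the associated canonical relation $\{(x,\xi y;\,y,\xi x):x\cdot y=t\}$ projects as a local canonical graph on each side.

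Next I would split $\xi$ into a low-frequency piece $|\xi|\le 1$ and a high-frequency piece $|\xi|\ge 1$. The low-frequency contribution has a smooth, compactly supported kernel and is trivially bounded on every Sobolev space. For the high-frequency piece, the Phong--Stein theorem on generalized Radon transforms (applied to integration over the $(d-1)$-dimensional hypersurface $\{y:x\cdot y=t\}$) yields the $(d-1)/2$-derivative gain from $L^2$ to $L^2$. The hypothesis $t\approx 1$ together with the reduction of Section \ref{sec:reductions}, which places $\supp\mu\subset[c,1]^d$, keeps both $x$ and $y$ bounded away from $0$, so the canonical relation is uniformly nondegenerate on the region that matters.

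The main obstacle is extracting bounds \emph{uniform in} $\epsilon$. The key observation that unlocks this is that $a_\epsilon$ lies in a bounded subset of the symbol class $S^0(\R)$: since $\hat\rho$ is Schwartz, $|\partial_\xi^k a_\epsilon(\xi)|=\epsilon^k|\hat\rho^{(k)}(\epsilon\xi)|\le C_{k,N}\,\epsilon^k(1+\epsilon|\xi|)^{-N}$, and choosing $N=k$ gives $|\partial_\xi^k a_\epsilon(\xi)|\lesssim_k(1+|\xi|)^{-k}$ for all $\epsilon\in(0,1]$. Since the Phong--Stein constants depend only on finitely many $S^0$-seminorms of the amplitude and on derivatives of the phase (all of which are $\epsilon$-free), the resulting bound is independent of $\epsilon$. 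Once that bookkeeping is done, the theorem follows.
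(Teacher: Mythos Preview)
The paper does not supply its own proof of this theorem; it simply records it as a known mapping property of the Radon transform and refers to Phong--Stein and to Stein--Shakarchi for the argument. Your sketch is exactly an unpacking of that citation: you realize $\cR_t^\epsilon$ as a Fourier integral operator of order $-(d-1)/2$ whose canonical relation is a local canonical graph (equivalently, the Phong--Stein rotational curvature equals $-x\cdot y=-t\neq 0$ for $t\approx 1$), and you correctly handle the $\epsilon$-uniformity by observing that $\{\hat\rho(\epsilon\,\cdot)\}_{0<\epsilon\le 1}$ lies in a bounded subset of $S^0(\R)$. One small caveat: the FIO machinery needs a smooth cutoff $\eta(x,y)$ localizing both variables to a compact set away from the origin; the paper tacitly inserts exactly such an $\eta$ when it actually uses this bound in the proof of Theorem~\ref{t:L2_mu_radon}, and your appeal to Section~\ref{sec:reductions} serves the same purpose, though strictly speaking that reduction concerns $\supp\mu$ rather than arbitrary $L^2(\R^d)$ inputs.
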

		Recall that the Sobolev space $L^2_\alpha(\R^d)$ is the function space
		equipped with the norm
		\begin{equation*}
			\norma{f}_{L_\alpha^2(\R^d)} := \left(\int (1 + |\xi|^2)^{\alpha} |\widehat{f}(\xi)|^2 d\xi \right)^{1/2}.
		\end{equation*}
		We bootstrap off this result to show $L^2(\mu) \to L^2(\mu)$ boundedness of $\cR_t^\epsilon$.
		This was done for convolution operators in \cite{iosevich2019maximal}, but the method also applies (as the authors  of \cite{iosevich2019maximal} remark) to Radon-type operators. We give the proof below.
		\begin{mythm}{\ref{t:L2_mu_radon}}
			If $\mu$ is a Frostman measure with exponent $s > (d+1)/2$ and with support as was established in Section \ref{sec:reductions}, $\cR_t^\epsilon$ is a bounded linear operator $L^2(\mu) \mapsto L^2(\mu)$ with
			\begin{equation*}
				\norma{\cR_t^\epsilon(f\mu)}_{L^2(\mu)} \lesssim \norma{f}_{L^2(\mu)}
			\end{equation*}
			independently of $\epsilon > 0$ and for $t \approx 1$.
		\end{mythm}
		\begin{proof}
			By polarization, it suffices to show that for 
			any $\norma{g}_{L^2(\nu)} \leq 1$,
			\begin{equation*}
				|\ang{\cR^\epsilon_t(f\mu), g\mu}| \lesssim \norma{f}_{L^2(\mu)} \norma{g}_{L^2(\mu)}
			\end{equation*}
			independently of $\epsilon > 0$ and $t \approx 1$.
			To proceed, we localize to dyadic frequencies. We will see that the large frequencies are the only ones that give us any trouble, so we consider a partition of unity 
			\begin{equation*}
				\sum_{j\geq 1} \chi(2^{-j}\xi) + \chi_0(\xi) = 1,
			\end{equation*}
			where $\chi$ is supported in the annulus $\{\xi : 1/4 \leq |\xi| \leq 1\}$. One can construct such functions by considering a $C^\infty$ function $\phi$ equal to 1 when $|\xi| \geq 1$ and to $0$ when $|x| \leq 1/2$, and letting $\chi(\xi) = \phi(2\xi) - \phi(\xi)$.
			The function $\chi_0$ is supported in the ball $\{\xi : |\xi| \leq 1\}$ and is also smooth, since $\sum_{j \geq 1} \chi(2^{-j} \cdot)$ being the sum of only a finite number of smooth functions in a neighborhood of any point is smooth. Now we can define the Littlewood-Paley projection by the relation 
			\begin{equation*}
				\widehat{P_j f}(\xi) = \widehat{f}(\xi) \chi(2^{-j} \xi)
			\end{equation*}
			for $j \geq 1$, and 
			\begin{equation*}
				\widehat{P_0 f}(\xi) = \widehat{f}(\xi) \chi_0(\xi).
			\end{equation*}
			Then $f = \sum_{j \geq 0} P_j f$.
			Applying the Littlewood-Paley decomposition to $f \mu$ and $g \mu$, we obtain
			\begin{align}
				|\ang{\cR^\epsilon_t(f\mu), g\mu}|& \leq \sum_{j, k \geq 0} |\ang{\cR^\epsilon_t(P_j(f\mu)),
			    P_k(g\mu)}|
				\\ 
				&= \sum_{|j-k| \leq M} |\ang{\cR^\epsilon_t(P_j(f\mu)), P_k(g\mu)}| + \sum_{|j-k| > M}
				|\ang{\cR^\epsilon_t(P_j(f\mu)), P_k(g\mu)}|,
				\label{eq:sum}
			\end{align}
		
			where $M$ is a constant to be chosen later. We handle the $|j-k| \leq M$ portion first.
			We need a mechanism to transfer an $L^2_{(d+1)/2}$ bound to a $L^2(\mu)$ one.
			We need the following generic test for $L^2$ boundedness. See \cite{wolff2003lectures} Lemma 7.5.
			\begin{theorem}[Schur's test]
				Let $(X, \mu)$ and $(Y,\nu)$ be measure spaces, and let $K(x,y)$ be a measurable function on $X \times Y$ with
				\begin{equation*}
					\int_X |K(x,y)|d\mu(x) \leq A \text{ for each $y$},
				\end{equation*}
				\begin{equation*}
					\int_Y |K(x,y)|d\nu(x) \leq B \text{ for each $x$}.
				\end{equation*}
				Define $T_K f(x) = \int K(x,y)f(y)d\nu(y)$. Then there is an estimate 
				\begin{equation*}
					\norma{T_K f}_{L^2(\mu)} \leq \sqrt{AB} \norma{f}_{L^2(\mu)}.
				\end{equation*} 
			\end{theorem}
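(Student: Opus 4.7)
The plan is to prove Schur's test by the standard two–step Cauchy–Schwarz argument, splitting the kernel symmetrically as $|K(x,y)| = |K(x,y)|^{1/2}\cdot |K(x,y)|^{1/2}$ so that one factor carries the $f$ and the other is absorbed by one of the two hypothesized row/column bounds. This is a textbook computation; I spell out the steps only because the statement itself is the last line of the excerpt.

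First, I would bound $|T_K f(x)|$ pointwise. Writing
\begin{equation*}
|T_K f(x)| \leq \int |K(x,y)|^{1/2}\cdot |K(x,y)|^{1/2}|f(y)|\, d\nu(y),
\end{equation*}
Cauchy–Schwarz in $y$ gives
\begin{equation*}
|T_K f(x)|^2 \leq \left(\int |K(x,y)|\, d\nu(y)\right)\left(\int |K(x,y)|\,|f(y)|^2\, d\nu(y)\right) \leq B \int |K(x,y)|\,|f(y)|^2\, d\nu(y),
\end{equation*}
using the column bound. Next, I integrate this in $x$ against $\mu$ and swap the order of integration by Tonelli (legitimate since the integrand is nonnegative and measurable):
\begin{equation*}
\int_X |T_K f(x)|^2\, d\mu(x) \leq B \int_Y |f(y)|^2 \left(\int_X |K(x,y)|\, d\mu(x)\right) d\nu(y) \leq AB \int_Y |f(y)|^2\, d\nu(y),
\end{equation*}
invoking the row bound in the last inequality. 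Taking square roots yields the asserted estimate $\norma{T_K f}_{L^2(\mu)} \leq \sqrt{AB}\,\norma{f}_{L^2(\nu)}$.

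There is no genuine obstacle here; the only subtlety worth flagging is the measurability/Tonelli step, which requires $K$ to be jointly measurable (assumed) and the integrands to be nonnegative (ensured by the absolute values). I would also note in passing that the right-hand side of the displayed conclusion in the excerpt should read $\norma{f}_{L^2(\nu)}$ rather than $\norma{f}_{L^2(\mu)}$, since $f$ lives on $(Y,\nu)$; this is a harmless typo and does not affect how the lemma is applied in the body of the paper.
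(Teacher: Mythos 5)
Your proof is correct and is the standard argument: split $|K|=|K|^{1/2}\cdot|K|^{1/2}$, apply Cauchy--Schwarz in $y$, then Tonelli to bring in the other kernel bound. The paper does not prove this statement at all --- it simply cites Lemma 7.5 of Wolff's lecture notes, and the proof given there is the same one you wrote out --- so there is nothing to compare beyond noting that your write-up fills in the omitted reference. You are also right that the conclusion should read $\norma{T_K f}_{L^2(\mu)} \leq \sqrt{AB}\,\norma{f}_{L^2(\nu)}$ (and the second hypothesis should be $d\nu(y)$, not $d\nu(x)$); in the paper's application $(X,\mu)$ is $\R^d$ with Lebesgue measure and $(Y,\nu)$ is $(\R^d,\mu)$, so the distinction matters and the displayed statement contains a genuine typo, harmless as you say.
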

			Now we can continue to prove the following.
			\begin{lemma}\label{l:wolff_lemma}
				If $\mu$ is a Frostman measure with exponent $s$ we have the estimate
				\begin{equation*}
					\norma{\widehat{f\mu}}_{L^2(|\xi| \leq 2^j)} \lesssim 2^{j(d-s)/2}\norma{f}_{L^2(\mu)}.
				\end{equation*}
			\end{lemma}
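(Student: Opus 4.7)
The plan is to realize the restriction $f \mapsto \widehat{f\mu}|_{\{|\xi|\leq 2^j\}}$ as (a piece of) a bounded operator $T_j : L^2(\mu) \to L^2(\R^d)$, and to estimate its norm via the $T_j^* T_j$ formalism together with Schur's test. Pick a smooth bump $\psi$ on $\R^d$ with $\psi \geq 1$ on $B(0,1)$ and $\supp \psi \subset B(0,2)$, set $\psi_j(\xi) = \psi(2^{-j}\xi)$, and define $T_j f = \psi_j \widehat{f\mu}$. Since $\psi_j \geq 1$ on $\{|\xi| \leq 2^j\}$, it suffices to bound $\norma{T_j f}_{L^2(\R^d)}$ by $2^{j(d-s)/2}\norma{f}_{L^2(\mu)}$.

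A direct computation shows that $T_j^* T_j$ is the integral operator on $L^2(\mu)$ with kernel $K(x,y) = \Phi_j(x-y)$, where $\Phi_j$ is the inverse Fourier transform of $\psi_j^2$. Since $\psi_j^2$ is a smooth bump at scale $2^j$, we have $\Phi_j(z) = 2^{jd}(\widehat{\psi^2})(2^j z)$, and because $\widehat{\psi^2}$ is Schwartz,
\begin{equation*}
|\Phi_j(z)| \lesssim 2^{jd}(1+ 2^j|z|)^{-N}
\end{equation*}
for every $N$.

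The main step is Schur's test applied to this kernel. Decomposing $\R^d$ into the inner ball $\{|x-y| \leq 2^{-j}\}$ and the dyadic annuli $\{2^{k-j} \leq |x-y| < 2^{k-j+1}\}$ for $k \geq 0$, and invoking the Frostman bound $\mu(B(y,r)) \lesssim r^s$, I would estimate
\begin{align*}
\int |\Phi_j(x-y)|\, d\mu(x) &\lesssim 2^{jd} \mu(B(y, 2^{-j})) + \sum_{k\geq 0} 2^{jd} 2^{-kN} \mu(B(y, 2^{k-j+1})) \\
&\lesssim 2^{j(d-s)}\Big(1 + \sum_{k \geq 0} 2^{k(s-N)}\Big) \lesssim 2^{j(d-s)},
\end{align*}
uniformly in $y$, provided $N$ is chosen larger than $s$. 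Since $\psi_j^2$ is real and even, $|\Phi_j(x-y)| = |\Phi_j(y-x)|$, so the symmetric estimate in $x$ holds as well. Schur's test then gives $\norma{T_j^*T_j f}_{L^2(\mu)} \lesssim 2^{j(d-s)} \norma{f}_{L^2(\mu)}$, from which $\norma{T_j f}_{L^2(\R^d)} \lesssim 2^{j(d-s)/2} \norma{f}_{L^2(\mu)}$ follows.

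There is no substantial obstacle; the only subtlety is taking $N > s$ so that the dyadic geometric series converges. The Frostman hypothesis enters exactly once, in that sum, and is precisely what produces the exponent $(d-s)/2$ in the final estimate.
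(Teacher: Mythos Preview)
Your proof is correct and follows essentially the same strategy as the paper: majorize the sharp cutoff by a smooth bump, pass to a convolution kernel via Plancherel (equivalently your $T_j^*T_j$), and apply Schur's test together with the Frostman bound. The only technical difference is that the paper chooses the bump $\phi$ to be Schwartz with \emph{compactly supported} Fourier transform, so the kernel $2^{jd}\widehat{\phi}(2^{j}(x-y))$ is supported in a single ball of radius $\sim 2^{-j}$ and the Schur estimate is immediate, whereas your Schwartz-tail kernel requires the extra dyadic summation over annuli.
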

			Lemma \ref{l:wolff_lemma} is a special case of a result in \cite{StrichartzFourierAsymptotics}. 
			A straightforward proof of Lemma \ref{l:wolff_lemma} is contained in \cite{wolff2003lectures}, but we give it here with all the details. 
			\begin{proof}
				Let $\phi$ be an even Schwarz function which is $\geq 1$ on the unit ball and whose Fourier transform has compact support. It is not difficult to see that such a function exists. For example, let $f$ be a real-valued, nonnegative, symmetric $C_0^\infty(\R^d)$ function such as
				\begin{equation*}
					f(x) = \begin{cases}
						\exp(- \frac{1}{1-|x|^2}) &|x| \leq 1 \\
						0 &\text{otherwise}
					\end{cases}.
				\end{equation*}
				Define $g := \mathcal F^{-1}(f)$. Then 
				\begin{equation*}
					g(0) = \int f(x) dx > 0,
				\end{equation*}
				and by rescaling $f$ we can assume $g(0) \geq 2$. $g$ is certainly continuous, so for some $\delta > 0$, $g \geq 1$  on $\{|x| \leq \delta \}$.
				Finally define $\phi(x) = \phi(\delta x)$ which is as desired. 
				
				Now we define $\phi_j(\cdot) = \phi(2^{-j} \cdot)$, which is at least 1 on 
				$\{|\xi| \leq 2^j \}$. Using this and Plancherel, 
				\begin{align*}
					\norma{\widehat{f\mu}}_{L^2(|\xi| \leq 2^j)} &\leq \norma{\phi_j \widehat{f\mu}}_{L^2} \\
					&= \norma{\widehat{\phi_j} *(f\mu)}_{L^2}.
				\end{align*}
				This last line is the $L^2$ norm of the function 
				\begin{equation*}
					x\mapsto \int 2^{jd} \widehat{\phi}(2^j(x-y)) f(y) d\mu(y).
				\end{equation*}
				We have
				\begin{equation*}
					\int |2^{jd} \widehat{\phi}(2^j(x-y))| dx = \norma{\widehat{\phi}}_{L^1}
				\end{equation*}
				by a change of variables. $\widehat \phi$ has compact support in some fixed ball $M$, so 
				\begin{align*}
					\int |2^{jd} \widehat{\phi}(2^j(x-y))| d\mu(y) 
					&= 2^{jd} \int_{|x-y| \leq M2^{-j}}  |\widehat{\phi}(2^j(x-y))| d\mu(y) \\
					&\lesssim 2^{j(d-s)}.
				\end{align*}
				The last line follows from the fact that $\mu(B(x,r)) \lesssim r^s$.
				Now we can apply Schur's test with the kernel $K(x,y) = 2^{jd}\widehat{\phi}(2^j (x-y))$ and obtain 
				\begin{equation*}
					\norma{\widehat{\phi_j} *(f\mu)}_{L^2} \lesssim 2^{j(d-s)/2} \norma{f}_{L^2(\mu)}.
				\end{equation*}
			\end{proof}
			Returning to the proof of Theorem $\ref{t:L2_mu_radon}$,
			Plancherel and Cauchy-Schwarz give that the $|j-k| \leq M$ portion of \eqref{eq:sum} is dominated by 
			\begin{align*}
				\sum_{|j-k| \leq M} \norma{\widehat{\cR^\epsilon_t(P_j(f\mu))}}_{L^2(|\tau| \approx 2^k)} \norma{\widehat{P_k(g\mu)}}_{L^2}.
			\end{align*}
			We take advantage of Theorem \ref{t:Radon_sobolev}, the $L^2(\R^d) \to L^2_{(d-1)/2}(\R^d)$ boundedness of the Radon transform $\cR_t^\epsilon$.
			We have
			\begin{align*}
				\norma{\widehat{\cR^\epsilon_t(P_j(f\mu))}}_{L^2(|\tau| \approx 2^k)} &\lesssim 2^{-k(d-1)/2} 
				\left(\int (1+|\tau|^2)^{(d-1)/2} |\widehat{\cR^\epsilon_t(P_j(f\mu))}(\tau)|^2 d\tau \right)^{1/2}\\
				&= 2^{-k(d-1)/2} \norma{\cR^\epsilon_t(P_j(f\mu))}_{L^2_{(d-1)/2}} \\
				&\leq 2^{-k(d-1)/2} \norma{P_j(f\mu)}_{L^2} \\
				&\lesssim 2^{-k(d-1)/2} 2^{j(d-s)/2}.
			\end{align*}
			Thus we are left with
			\begin{align*}
				\sum_{|j-k| \leq M} \norma{\widehat{\cR^\epsilon_t(P_j(f\mu))}}_{L^2(|\tau| \approx 2^k)} \norma{\widehat{P_k(g\mu)}}_{L^2}
				\lesssim \sum_{|j-k| \leq M} 2^{-k(d-1)/2}2^{j(d-s)/2} 2^{k(d-s)/2},
			\end{align*}
			which is summable if $s > (d + 1) / 2$. 
			
			\bigskip
			We still need to handle the $|j-k| > M$ portion of \eqref{eq:sum}.
			This diagonalization can be executed with the following Lemma.
			\begin{lemma} \label{l:diag}
				For any positive integer $N$, there is a $K$ such that if $|j - k| > K$, 
				\begin{align*}
					|\ang{\cR^\epsilon_t(P_j(f\mu)), P_k(g\mu)}| \lesssim_N 2^{-N\max(j,k)} \norma{f}_{L^2(\mu)} 
					\norma{g}_{L^2(\mu)},
				\end{align*}
				independently of $\epsilon > 0$ and for $t \approx 1$. 
			\end{lemma}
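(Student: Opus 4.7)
The plan is to use Fourier inversion on the kernel $\rho^\epsilon$ and then integrate by parts to exploit the scale gap between the Fourier supports of $P_j(f\mu)$ and $P_k(g\mu)$. Writing $\rho^\epsilon(u) = (2\pi)^{-1}\int \widehat\rho(\epsilon\lambda) e^{i\lambda u}\, d\lambda$ and expanding $P_j(f\mu), P_k(g\mu)$ via their Fourier representations, the $x$ and $y$ integrations collapse through delta functions to produce
\begin{equation*}
\ang{\cR^\epsilon_t P_j(f\mu), P_k(g\mu)} = c\iiint \widehat\rho(\epsilon\lambda)|\lambda|^{-d}\widehat{P_j(f\mu)}(\eta)\overline{\widehat{P_k(g\mu)}(\xi)} e^{i(\eta\cdot\xi/\lambda - \lambda t)}\, d\lambda\, d\eta\, d\xi,
\end{equation*}
with the $\eta$-support localized to $|\eta|\approx 2^j$ and the $\xi$-support to $|\xi|\approx 2^k$. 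Assume without loss of generality that $j\geq k$, so that $\max(j,k) = j$.

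The workhorse is integration by parts in $\xi$ using the phase $e^{i\eta\cdot\xi/\lambda}$. Each of $M$ iterations produces a factor $-\lambda/(i|\eta|^2)$ and a directional derivative $(\eta\cdot\nabla_\xi)$ acting on $\widehat{P_k(g\mu)}(\xi) = \chi(2^{-k}\xi)\widehat{g\mu}(\xi)$ of magnitude $|\eta|$. Since derivatives on $\chi(2^{-k}\cdot)$ produce decaying factors $2^{-k}$ and derivatives on $\widehat{g\mu}$ produce $\widehat{y^\ell g\mu}$, bounded uniformly by $\norma{g}_{L^2(\mu)}$ as $\mu$ is supported in $[c,1]^d$, the net pointwise gain is $C_M\norma{g}_{L^2(\mu)}(|\lambda|/|\eta|)^M$.

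I split the $\lambda$-integration at $|\lambda|=2^{j/2}$. On $|\lambda|\leq 2^{j/2}$, the $\xi$-IBP yields gain at most $2^{-Mj/2}$; combined with the trivial bound $|\widehat{P_j(f\mu)}|_\infty\leq\norma{f}_{L^2(\mu)}$, the volumes $2^{(j+k)d}$ of the Fourier supports, and $\int_{|\lambda|\leq 2^{j/2}}|\lambda|^{M-d}\, d\lambda\lesssim 2^{j(M-d+1)/2}$, the contribution from this range is bounded by $C_M\norma{f}_{L^2(\mu)}\norma{g}_{L^2(\mu)}\cdot 2^{j(-M/2+(3d+1)/2)}$. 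On $|\lambda|>2^{j/2}$, integration by parts in $\lambda$ against the full phase $e^{i\phi(\lambda)}$, with $\phi(\lambda)=\eta\cdot\xi/\lambda - \lambda t$, gains $|\lambda|^{-1}$ per step from differentiating the amplitude $|\lambda|^{-d}$ (since $|\phi'|\approx |t|\approx 1$ once $|\lambda|\gg 2^{(j+k)/2}$), producing $|\lambda|^{-M}$ decay after $M$ iterations and a comparable overall bound.

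Taking $M\geq 3d+1+2N$ and $K=K(N)$ accordingly yields the required decay $2^{-Nj}\norma{f}_{L^2(\mu)}\norma{g}_{L^2(\mu)} = 2^{-N\max(j,k)}\norma{f}_{L^2(\mu)}\norma{g}_{L^2(\mu)}$, independently of $\epsilon$ and of $t\approx 1$. The main obstacle is the middle $\lambda$-range $|\lambda|\approx 2^{(j+k)/2}$ near the stationary point of $\phi$, where naive IBP in $\lambda$ stalls because $|\phi'|$ can be small; here one must combine the $\xi$-IBP gain (still effective since $|\lambda|\leq 2^j$) with careful control of the polynomial losses from $|\lambda|^{-d}$ and the support volumes, which is possible precisely because $|j-k|>K$ forces the stationary point $|\lambda^*|\approx 2^{(j+k)/2}$ to sit well below $|\eta|\approx 2^j$.
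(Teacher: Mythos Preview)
Your representation of the pairing as
\[
c\iiint \widehat\rho(\epsilon\lambda)\,|\lambda|^{-d}\,\widehat{P_j(f\mu)}(\eta)\,\overline{\widehat{P_k(g\mu)}(\xi)}\;e^{i(\eta\cdot\xi/\lambda-\lambda t)}\,d\lambda\,d\eta\,d\xi
\]
is correct, but the subsequent estimate has a genuine gap in the ``middle'' $\lambda$-range, and your final paragraph does not close it. The $\xi$-integration by parts gives a gain of $(|\lambda|/|\eta|)^M\approx(|\lambda|/2^j)^M$, which near the stationary point $|\lambda^\ast|\approx 2^{(j+k)/2}$ is only $2^{-(j-k)M/2}$. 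This decays in the \emph{gap} $j-k$, not in $\max(j,k)=j$. Pairing it with the crude losses --- the $\eta,\xi$ support volumes $2^{jd}2^{kd}$, the factor $|\lambda|^{-d}$, and even the stationary-phase saving $|\phi''(\lambda^\ast)|^{-1/2}\approx 2^{(j+k)/4}$ in place of the full $\lambda$-length --- yields a bound of order $2^{C(j+k)-(j-k)M/2}$. If $j-k=K+1$ (the minimal admissible gap) while $j\to\infty$, the exponent tends to $+\infty$ for any fixed $M,K$. No choice of $M$ or $K$ repairs this, and neither does integrating by parts in $\eta$ (the gradient there has size $|\xi|/|\lambda|\approx 2^{-(j-k)/2}$, a loss) or in $\lambda$ (you are at the critical point).

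The paper avoids this by \emph{not} collapsing the $(x,y)$-integral. It inserts a spatial cutoff $\eta(x,y)$ supported in $[c,1]^d\times[c,1]^d$ (legitimate since $\mu$ lives there), writes the pairing as $\iiint \widehat{P_j(f\mu)}(\xi)\,\widehat{P_k(g\mu)}(\tau)\,\widehat\rho(\epsilon s)\,I_{jk}(\xi,\tau,s)\,d\xi\,d\tau\,ds$ with
\[
I_{jk}(\xi,\tau,s)=\iint e^{2\pi i(y\cdot\xi-x\cdot\tau+s(x\cdot y-t))}\eta(x,y)\,dx\,dy,
\]
and runs nonstationary phase in $(x,y)$. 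On $\supp\eta$ one has $|x|,|y|\approx 1$, and a short case analysis ($|s|\ll|\tau|$, $|s|\gg|\tau|$, $|s|\approx|\tau|$ combined with $|\xi|\ll|\tau|$) shows $|\nabla_{x,y}\Psi|\gtrsim\max(|\tau|,|s|)$ everywhere --- there is no stationary point to worry about. Repeated integration by parts then gives $|I_{jk}|\lesssim_N(1+|s|)^{-2}2^{-N\max(j,k)}$, and the $\xi,\tau,s$ integrals are controlled by Lemma~\ref{l:wolff_lemma}. The spatial cutoff away from the origin is exactly what your delta-function collapse discards; in your variables the same information is the constraint $|\eta/\lambda|\approx|x|\approx 1$, which would force $|\lambda|\approx 2^j$ and place you far from $\lambda^\ast$ --- but extracting that requires unpacking $\widehat{g\mu}$ rather than treating it as a generic smooth amplitude.
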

			We give the proof below. With Lemma \ref{l:diag} we see that
			\begin{equation*}
				\sum_{|j-k| > M} |\ang{\cR_t^\epsilon(P_j(f\mu)), P_k(g\mu)}| \lesssim_N \norma{f}_{L^2(\mu)} \norma{g}_{L^2(\mu)}\sum_{|j-k| > M} 2^{-N\max(j,k)}.
			\end{equation*}
			This is summable even when $N = 1$. \qed

			\begin{proof}[Proof of Lemma \ref{l:diag}]
				We will argue by nonstationary phase. By the support properties of $\mu$, we can insert a bump function $\eta$ with support in $[c,1]^d$. By Fourier inversion on $P_j(f\mu)$ and $\rho^\epsilon$,
				\begin{align*}
					\cR_t^\epsilon (P_j(f\mu)) (x) &=
					\int P_j(f\mu)(y) \eta(x,y)  \rho^\epsilon(x \cdot y - t) dy \\
					&= \iiint e^{2\pi i(y \cdot \xi + s(x\cdot y - t))} \widehat{P_j (f\mu)}(\xi) \eta(x,y) \widehat{\rho}(\epsilon s) d\xi ds dy
				\end{align*}
				Taking the Fourier transform of $\cR_t(P_j(f\mu))$, 
				\begin{align*}
					\widehat{\cR_t(P_j(f\mu))}(\tau) &= \iiiint e^{2\pi i(y \cdot \xi -  x \cdot \tau+ s(x\cdot y - t))} \widehat{P_j (f\mu)}(\xi) \eta(x,y) \widehat{\rho}(\epsilon s) d\xi ds dy.
				\end{align*}
				Finally Plancherel gives
				\begin{align*}
					\ang{\cR_t^\epsilon(P_j(f\mu)), P_k(g\mu)} = 
					\iiint \widehat{P_j(f\mu)}(\xi)  \widehat{P_k(g\mu)}(\tau) \widehat{\rho}(\epsilon s) d\xi d\tau ds,
					%			e^{2\pi i(y \cdot \xi +  x \cdot \tau+ s(x\cdot y - t))} \widehat{P_j (f\mu)}(\xi) \eta(x,y) \widehat{\rho}(\epsilon s) d\xi ds dy,
				\end{align*}
				where 
				\begin{equation*}
					I_{jk}(\xi, \tau, s) = \chi_j(\xi) \chi_k(\tau)\iint e^{2\pi i(y \cdot \xi - x\cdot \tau + s(x\cdot y - t))}\eta(x,y) dx dy.
				\end{equation*}
				Inserting the smooth cutoffs from the definition of the Littlewood-Paley decomposition is justified as $\chi_j \approx \chi_j^2$.
				For convenience we write 
				\begin{equation*}
					\Psi_{\xi,\tau,s}(x,y) = y\cdot \xi - x\cdot \tau + s(x\cdot y - t).
				\end{equation*}
				We would be done if we could show that
				\begin{equation} \label{eq:diag:meat}
					|I_{jk}(\xi,\tau, s)| \lesssim_N (1 + |s|)^{-2} 2^{-N\max(j,k)}.
				\end{equation}
				$\rho$ is Schwarz since it is even $C_0^\infty$, so $\widehat \rho$ is Schwarz. This gives $|\widehat{\rho}(\epsilon s)| \lesssim 1$.
				It would follow from $|\widehat{\rho}(\epsilon s| \lesssim 1$ and \eqref{eq:diag:meat} that  
				\begin{align*}
					|\ang{\cR_t^\epsilon(P_j(f\mu)), P_k(g\mu)}| &\leq \iiint |\widehat{P_j(f\mu)}(\xi)| |\widehat{P_k(g\mu)}(\tau)| |\widehat{\rho}(\epsilon s)| ds d\xi d\tau \\
					&\lesssim_N 2^{-N\max(j,k)} \iiint |\widehat{P_j(f\mu)}(\xi)| |\widehat{P_k(g\mu)}(\tau)| 
					(1+|s|)^{-2}  ds d\xi d\tau \\
					&= 2^{-N\max(j,k)} \int |\widehat{P_j(f\mu)}(\xi)| d\xi 
					\int |\widehat{P_k(g\mu)}(\tau)|d\tau \int (1 + |s|)^{-2} ds.
				\end{align*}
				The integral in $s$ is finite. By Cauchy-Schwarz and Lemma \ref{l:wolff_lemma}, the integrals in $\xi$ and $\tau$ are dominated by 
				$2^{jd/2}2^{j(d-s)/2}$ and $2^{kd/2}2^{k(d-s)/2}$ respectively. Thus 
				\begin{equation*}
					|\ang{\cR_t^\epsilon(P_j(f\mu)), P_k(g\mu)}| \lesssim_N 2^{(-N + 2d - s)\max(j,k)},
				\end{equation*}
				so by choosing a large enough $N$ we are done. 
				
				Now we prove $\eqref{eq:diag:meat}$.
				We have 
				\begin{equation*}
					\nabla_x \Psi_{\xi,\tau,s} = -\tau + sy
				\end{equation*}
				and
				\begin{equation*}
					\nabla_y \Psi_{\xi,\tau,s} = \xi + sx.
				\end{equation*}
				Assume without loss of generality that $j > k +K$, so $|\xi| \ll |\tau|$.
				When $|s| \ll |\tau|$,
				\begin{equation*}
					|\nabla_x \Psi_{\xi, \tau, s}| \gtrsim |\tau| - |s| \gtrsim |\tau|,
				\end{equation*}
				Where we used that $\eta$ has fixed compact support. 
				If $|s| \gg |\tau|$,
				\begin{equation*}
					|\nabla_x \Psi_{\xi, \tau, s}| \gtrsim |s| - |\tau| \gtrsim |s|,
				\end{equation*}
				where we use that $\eta$ is not supported near the origin. 
				If $|s| \approx |\tau|$,
				\begin{align*}
					|\nabla_y \Psi_{\xi,\tau, s}| &\gtrsim |s| - |\xi| \\
					&\approx |\tau| - |\xi| \\
					&\gtrsim |\tau|.
				\end{align*}
				In any case $|\nabla \Psi_{\xi,\tau,s}| \gtrsim \max(|\tau|, |s|)$.
				
				It is immediate that all the partials of $\Psi_{\xi, \tau, s}$ are bounded above by a constant multiple of $\max(|\tau|, |s|)$. 
				Consider the differential operator 
				\begin{equation*}
					L = \frac{1}{2\pi i} \frac{\nabla \Psi_{\xi, \tau, s}}{|\nabla \Psi_{\xi, \tau, s}|^2} \cdot \nabla,
				\end{equation*}
				for which $e^{2\pi i \Psi_{\xi, \tau, s}}$ is clearly an eigenvalue. Therefore $L^N(e^{2\pi i \Psi_{\xi, \tau, s}}) = e^{2\pi i \Psi_{\xi, \tau, s}}$ for any positive integer $N$. Thus 
				\begin{equation*}
					I_{jk}(\xi, \tau, s) = \iint L^N(e^{2\pi i \Psi_{\xi, \tau, s}}) \eta dy'dx = \iint e^{2\pi i \Psi_{\xi,\tau, s}} (L^t)^N(\eta) dy' dx.
				\end{equation*}
				The transpose $L^t$ of $L$ is given by 
				\begin{align*}
					L^t(f) &= -\frac{1}{2\pi i} \nabla \cdot \left(\frac{\nabla \Psi_{\xi, \tau, s}}{|\nabla \Psi_{\xi, \tau, s}|^2} f \right) \\
					&= - \frac{1}{2\pi i} \left( \nabla f \cdot \frac{\nabla \Psi_{\xi, \tau, s}}
					{|\nabla \Psi_{\xi, \tau, s}|^2}  - f \cdot \frac{\nabla^2 \Psi_{\xi, \tau, s}}{|\nabla \Psi_{\xi, \tau, s}|^2}\right)
				\end{align*}
				and taking the modulus, 
				\begin{equation*}
					|L^t(\eta)| \lesssim |\nabla \eta|  \max(|\tau|, |s|)^{-1} + |\eta| 
					\max(|\tau|, |s|)^{-1}.
				\end{equation*}
				We can continue integrating by parts up to any positive $N'$ and obtain a bound
				\begin{equation*}
					|(L^t)^{N'}(\eta)| \lesssim_{N'} \max(|\tau|, |s|)^{-N'} \sum_{|\alpha| \leq N'} |\partial^\alpha \eta|.
				\end{equation*}
				Taking the modulus of $I_{jk}(\xi, \tau, s)$, we obtain 
				\begin{equation*}
					|I_{jk}(\xi,\tau, s)| \lesssim_{N'} \max(|\tau|, |s|)^{-N'} \iint \sum_{|\alpha| \leq N'} |\partial^\alpha \eta| dy'dx \lesssim_N \max(|\tau|, |s|)^{-N'},
				\end{equation*}
				since $\eta$ is Schwarz. Since $|\tau| \geq 1$,
				\begin{align*}
					|I_{jk}(\xi, \tau, s)| &\lesssim_N \max(|\tau|, |s|)^{-2} \max(|\tau|, |s|)^{-N' + 2} \\
					&\lesssim (1+|s|)^{-2} |\tau|^{-N' + 2} \\
					&\lesssim (1 + |s|)^{-2} 2^{-(N' + 2)\max(j,k)}.
				\end{align*}
				Taking $N'$ large enough we are done.
			\end{proof}
			With Lemma \ref{l:diag} proved, we are done.
		\end{proof}
	
	\section{acknowledgements}
		I would like to thank my undergraduate mentor and friend Alex Iosevich, for his support in this project and in many others. 
		Thank you to Ben Baily, Brian Hu, and Ethan Pesikoff for
		helping to develop the symmetric tree cover idea over SMALL 2021.

\bibliography{bib}{}

\begin{thebibliography}{10}

\bibitem{chains}
Michael Bennett, Alexander Iosevich, and Krystal Taylor.
\newblock Finite chains inside thin subsets of {$\mathbb{R}^d$}.
\newblock {\em Analysis \& PDE}, 9(3):597--614, 2016.

\bibitem{du2021weighted}
Xiumin Du, Larry Guth, Yumeng Ou, Hong Wang, Bobby Wilson, and Ruixiang Zhang.
\newblock Weighted restriction estimates and application to falconer distance
  set problem.
\newblock {\em American Journal of Mathematics}, 143(1):175--211, 2021.

\bibitem{du2021improved}
Xiumin Du, Alex Iosevich, Yumeng Ou, Hong Wang, and Ruixiang Zhang.
\newblock An improved result for falconer’s distance set problem in even
  dimensions.
\newblock {\em Mathematische Annalen}, 380(3):1215--1231, 2021.

\bibitem{du2019sharp}
Xiumin Du and Ruixiang Zhang.
\newblock Sharp {$ L^{2}$} estimates of the schr{\"o}dinger maximal function in
  higher dimensions.
\newblock {\em Annals of Mathematics}, 189(3):837--861, 2019.

\bibitem{falconer1985}
Kenneth~J Falconer.
\newblock On the hausdorff dimensions of distance sets.
\newblock {\em Mathematika}, 32(2):206--212, 1985.

\bibitem{group-actions}
Allan Greenleaf, Alex Iosevich, Bochen Liu, and Eyvindur Palsson.
\newblock A group-theoretic viewpoint on erd{\H{o}}s--falconer problems and the
  mattila integral.
\newblock {\em Revista Matem{\'a}tica Iberoamericana}, 31(3):799--810, 2015.

\bibitem{nonempty_interior_radon}
Allan Greenleaf, Alex Iosevich, and Krystal Taylor.
\newblock Configuration sets with nonempty interior.
\newblock {\em The Journal of Geometric Analysis}, 31(7):6662--6680, 2021.

\bibitem{guth2020falconer}
Larry Guth, Alex Iosevich, Yumeng Ou, and Hong Wang.
\newblock On falconer’s distance set problem in the plane.
\newblock {\em Inventiones mathematicae}, 219(3):779--830, 2020.

\bibitem{guth-katz}
Larry Guth and Nets~Hawk Katz.
\newblock On the erd{\H{o}}s distinct distances problem in the plane.
\newblock {\em Annals of mathematics}, pages 155--190, 2015.

\bibitem{iosevich2019maximal}
A~Iosevich, Ben Krause, E~Sawyer, K~Taylor, and I~Uriarte-Tuero.
\newblock Maximal operators: scales, curvature and the fractal dimension.
\newblock {\em Analysis Mathematica}, 45(1):63--86, 2019.

\bibitem{treesIosevichTaylor}
A~Iosevich and K~Taylor.
\newblock Finite trees inside thin subsets of rd, modern methods in operator
  theory and harmonic analysis, 51--56.
\newblock {\em Springer Proc. Math. Stat}, 291, 2019.

\bibitem{IosevichTaylorIgnacio}
Alex Iosevich, Krystal Taylor, and Ignacio Uriarte-Tuero.
\newblock Pinned geometric configurations in euclidean space and riemannian
  manifolds, 2016.

\bibitem{mattila_geometry}
Pertti Mattila.
\newblock {\em Geometry of sets and measures in Euclidean spaces: fractals and
  rectifiability}.
\newblock Number~44. Cambridge university press, 1999.

\bibitem{PhongSteinRadon}
Duong~H Phong and Elias~M Stein.
\newblock Radon transforms and torsion.
\newblock {\em International Mathematics Research Notices}, 1991(4):49--60,
  1991.

\bibitem{steinshakfunctional}
Elias~M Stein and Rami Shakarchi.
\newblock Functional analysis: introduction to further topics in analysis.
\newblock {\em Princeton Lectures in Analysis}, 4, 2011.

\bibitem{StrichartzFourierAsymptotics}
Robert~S. Strichartz.
\newblock Fourier asymptotics of fractal measures.
\newblock {\em Journal of Functional Analysis}, 89(1):154--187, 1990.

\bibitem{wolff2003lectures}
Thomas~H Wolff.
\newblock {\em Lectures on harmonic analysis}, volume~29.
\newblock American Mathematical Soc., 2003.

\end{thebibliography}
\bibliographystyle{plain}

\end{document}